\documentclass[11pt,a4paper]{amsart}
\usepackage{amssymb,amsmath}

\textwidth=16.00cm
\textheight=22.00cm
\topmargin=0.00cm
\oddsidemargin=0.00cm
\evensidemargin=0.00cm
\headheight=14.4pt
\headsep=1cm
\numberwithin{equation}{section}
\hyphenation{semi-stable}
\emergencystretch=10pt


\newtheorem{theorem}{Theorem}[section]
\newtheorem{lemma}[theorem]{Lemma}
\newtheorem{proposition}[theorem]{Proposition}
\newtheorem{corollary}[theorem]{Corollary}

\theoremstyle{definition}
\newtheorem{definition}[theorem]{Definition}
\newtheorem{example}[theorem]{Example}
\newtheorem{problem}[theorem]{Problem}
\newtheorem{remark}[theorem]{Remark}

\newtheorem{acknowledgement}{Acknowledgement}

\newcommand\Supp{\operatorname{Supp}}
\newcommand\Ass{\operatorname{Ass}}
\newcommand\Assh{\operatorname{Assh}}
\newcommand\Ann{\operatorname{Ann}}

\newcommand\Hom{\operatorname{Hom}}

\newcommand\Ext{\operatorname{Ext}}
\newcommand\Rad{\operatorname{Rad}}

\newcommand\cd{\operatorname{cd}}
\newcommand\injdim{\operatorname{injdim}}

\newcommand\height{\operatorname{height}}

\newcommand{\gam}{\Gamma_{I}}
\newcommand{\Rgam}{{\rm R} \Gamma_{\mathfrak m}}

\newcommand{\qism}{\stackrel{\sim}{\longrightarrow}}

\begin{document}

\author[P.~Schenzel]{Peter Schenzel}
\title[Local cohomology modules]
{On Lyubeznik's invariants and endomorphisms of local cohomology modules}
\address{Martin-Luther-Universit\"at Halle-Wittenberg,
Institut f\"ur Informatik, D --- 06 099 Halle (Saale), Germany}
\email{peter.schenzel@informatik.uni-halle.de}

\begin{abstract}
Let $(R, \mathfrak m)$ denote an $n$-dimensional Gorenstein ring. For an
ideal $I \subset R$ of height $c$ we are interested in the endomorphism ring 
$B = \Hom_R(H^c_I(R), H^c_I(R)).$ It turns out that $B$ is a commutative 
ring. In the case of $(R,\mathfrak m)$ a regular local ring containing a 
field $B$ is a Cohen-Macaulay ring. Its properties are related to the 
highest Lyubeznik number $l = \dim_k \Ext_R^d(k,H^c_I(R)).$  In particular 
$R \simeq B$ if and only if $l = 1.$ Moreover, we show that
the natural homomorphism $\Ext_R^d(k, H^c_I(R)) \to k$ is non-zero.
\end{abstract}

\subjclass[2000]{Primary: 13D45, 13H05 ; Secondary: 14B15}

\keywords{local cohomology, Bass number, Lyubeznik number, endomorphism ring}

\maketitle

\section{Introduction}
Let $(R, \mathfrak m, k)$ denote a local Noetherian ring. For an
ideal $I \subset R$ let $H^i_I(R), i \in \mathbb Z,$ denote the local
cohomology modules of $R$ with respect to $I$ (cf. \cite{aG} for the
definition). They carry several information about $I$ and $R.$
Their Bass numbers $\dim_k \Ext^j_R(k, H^i_I(R)), i, j \in \mathbb Z,$
are in various directions important invariants (cf. for instance
\cite{gL1}, \cite{gL2}, \cite{HS}). In the case of a regular local
ring they are investigated by Lyubeznik (cf. \cite{gL1}) known as Lyubeznik invariants.

On the other hand, in recent research there are sufficient conditions
when the endomorphism ring of $H^c_I(R), c = \height I,$ is isomorphic
to $R$ (cf. for instance \cite{HeS}). Note that it is not clear whether
it is a commutative ring in general. The endomorphism ring $B :=
\Hom_R(H^c_I(R), H^c_I(R))$ is the main subject of our investigations
here, in particular when $(R, \mathfrak m)$ is a Gorenstein ring. It
carries a lot of interesting properties. For an ideal $I \subset R$ let $I_d$ denote the intersection of the highest dimensional primary components of $I.$

\begin{theorem} \label{1.1} Let $(R, \mathfrak m)$ denote an $n$-dimensional
Gorenstein ring. Let $I \subset R$ be an ideal with $c = \height I$ and $d =
\dim R/I.$ Then:
\begin{itemize}
  \item[(a)] The endomorphism ring $B = \Hom_R(H^c_I(R), H^c_I(R))$ is commutative.
  There is a natural isomorphism \[\Hom_R(H^c_I(R), H^c_I(R)) \simeq \Ext_R^c(H^c_I(R), R).\]
   Moreover $B$ is $\mathfrak m$-adically complete provided $R$ is a complete local ring.
\end{itemize}
Suppose that $(R, \mathfrak m)$ is a complete regular local ring containing a field.
Assume that $d > 0.$ Then:
\begin{itemize}
  \item[(b)] The ring extension $R \subset B$ is module-finite and $B$ is a
   Noetherian ring.
  \item[(c)] $B$ is a free $R$-module of rank $\dim_k \Ext_R^c(k, H^c_I(R))$ and therefore
  $B$ is a Cohen-Macaulay module.
  \item[(d)] $B$ is a local Noetherian ring if and only if $V(I_d)$ is connected
  in codimension one.
  \item[(e)] $R \to \Hom_R(H^c_I(R), H^c_I(R)) = B$ is an isomorphism if and only if
  the completion of the strict Henselization of $R/I_d$ is connected in codimension one.
\end{itemize}
\end{theorem}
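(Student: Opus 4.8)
The plan is to make everything rest on the natural isomorphism $B\cong\Ext^c_R(H^c_I(R),R)$, which already holds over any Cohen--Macaulay ring. Since $c=\grade I$ we have $H^i_I(R)=0$ for $i<c$, so the canonical truncation identifies $H^c_I(R)[-c]$ with $\tau_{\le c}R\Gamma_I(R)$; write $\iota\colon H^c_I(R)[-c]\to R\Gamma_I(R)$ for the resulting map. Because $H^c_I(R)[-c]$ has $I$-torsion cohomology and $R\Gamma_I$ is right adjoint to the inclusion of $I$-torsion complexes, $\Hom_{D(R)}(H^c_I(R)[-c],R)\cong\Hom_{D(R)}(H^c_I(R)[-c],R\Gamma_I(R))$, and since $\Hom_{D(R)}$ out of a module placed in degree $c$ into $\tau_{>c}R\Gamma_I(R)$ vanishes, the latter group is $\Hom_R(H^c_I(R),H^c_I(R))=B$. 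The homothety $R\to B$ is then the composite $R\to\widehat R^{\,I}\to B$, where $R\to\widehat R^{\,I}$ comes from the Greenlees--May/Alonso--Jerem\'\i as--Lipman isomorphism $\RHom_R(R\Gamma_I(R),R)\simeq\widehat R^{\,I}$ ($R$ Noetherian) and $\widehat R^{\,I}\to B$ from restriction along $\iota$; as a complete local Noetherian ring is $I$-adically complete and $B$ has an inverse-limit presentation, $B$ is $\mathfrak m$-adically complete once $R$ is. For commutativity I would reduce to $R$ complete and, using the known isomorphism $H^c_I(R)\cong H^c_{I_d}(R)$, to the unmixed case $I=I_d$; writing $H^c_I(R)=\varinjlim_t\omega_{R/I^t}$ as an increasing union of the canonical modules $\omega_{R/I^t}=\Ext^c_R(R/I^t,R)$ (the transition maps injective by the vanishing of $\Ext^{c-1}_R(-,R)$ on $V(I)$-supported modules) yields $B=\varprojlim_t\Hom_R(\omega_{R/I^t},H^c_I(R))$, and one identifies this limit with an inverse limit of commutative rings attached to the $R/I^t$ (their $S_2$-ifications, equivalently the rings of sections on their punctured spectra). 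Checking that all the structure maps here are ring homomorphisms and that commutativity survives the limit is the delicate point of (a).

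From now on $R$ is a complete regular local ring containing a field and $d>0$, and the engine is Lyubeznik's theory: $H^c_I(R)$ has finite Bass numbers and finite injective dimension $\dim\Supp H^c_I(R)\le d$; in particular each $\Ext^j_R(k,H^c_I(R))$ is finite dimensional, each $H^i_{\mathfrak m}(H^c_I(R))$ is Artinian, and (by local duality) each $\Ext^j_R(H^c_I(R),R)$ is finitely generated. For (b): exploiting the self-duality up to shift $[n]$ of the Koszul complex on a regular system of parameters, which gives $\Ext^j_R(k,N)\cong\Tor^R_{n-j}(k,N)$ for every $N$, one obtains $\Ext^c_R(H^c_I(R),k)\cong\Ext^{d}_R(k,H^c_I(R))^{\ast}$, of dimension $l$; a Nakayama argument over the complete ring (using (a)) then shows $B$ is module-finite over $R$ and generated by $l$ elements, hence Noetherian. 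For (c): one computes $\depth_R B$ by combining local duality with the composition spectral sequence $H^p_{\mathfrak m}(H^q_I(R))\Rightarrow H^{p+q}_{\mathfrak m}(R)$ (abutment $E_R(k)$ in degree $n$ only); the conclusion is $\depth_R B=n$, so $B$ is a maximal Cohen--Macaulay $R$-module, hence---over the regular ring $R$---free, necessarily of rank $l$. Since $R\hookrightarrow B$ we get $l\ge1$; the same analysis recovers the final assertion of the abstract, namely that the natural map $\Ext^{d}_R(k,H^c_I(R))\to\Hom_R(k,H^d_{\mathfrak m}(H^c_I(R)))\to\Hom_R(k,E_R(k))=k$ (the second arrow induced by $\iota$ inside $R\Gamma_{\mathfrak m}R\Gamma_I(R)=R\Gamma_{\mathfrak m}(R)$) is non-zero, because the corner term of that spectral sequence survives and carries the socle of $E_R(k)$.

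For (d) and (e): by (b)--(c), $B$ is a complete, free, module-finite $R$-algebra, hence a finite product of complete local rings indexed by its idempotents; via the $S_2$-ification picture of (a) the idempotents of $B$ match the connected components of the punctured spectrum of $R/I_d$, so Grothendieck's connectedness theorem (equivalently the Hartshorne/Hochster--Huneke connectedness results for complete local rings) gives that $B$ is local exactly when $V(I_d)$ is connected in codimension one, which is (d). For (e), $B=R$ iff $\operatorname{rank}_R B=1$, i.e.\ by (c) iff $l=1$, i.e.\ iff the highest Lyubeznik number $\lambda_{d,d}(R/I)$ equals $1$; by Lyubeznik's combinatorial description of $\lambda_{d,d}$ as the number of connected components of the Hochster--Huneke dual graph of $\widehat{(R/I_d)^{\mathrm{sh}}}$, this holds precisely when the completion of the strict Henselisation of $R/I_d$ is connected in codimension one.

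The main obstacle I anticipate is (b)--(c): passing from the purely formal fact that $B$ is a commutative ring to the quantitative statement that $B$ is a \emph{free} $R$-module of rank $l$ demands real control of $\RHom_R(H^c_I(R),R)$, and this is exactly where Lyubeznik's finiteness and finite-injective-dimension theorems are indispensable, together with a somewhat intricate bookkeeping in the two spectral sequences relating $\Gamma_{\mathfrak m}$, $\Gamma_I$ and $\RHom_R(-,R)$. By contrast, once $B$ is known to be a free module-finite $R$-algebra and $l$ is identified with the top Lyubeznik number, the connectedness statements (d)--(e) should follow comparatively softly from standard connectedness theorems for complete local rings.
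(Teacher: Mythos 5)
Your overall architecture is essentially the paper's: (a) is obtained by realizing $B\simeq\Ext^c_R(H^c_I(R),R)$ as the inverse limit of the commutative $S_2$-ifications $\Hom_R(K(R/I^t),K(R/I^t))$; (b)--(c) rest on the Huneke--Sharp/Lyubeznik finiteness theorems; (d)--(e) rest on the Hochster--Huneke graph and the Lyubeznik--Zhang identification of $l=\dim_k\Ext^d_R(k,H^c_I(R))$ with the number of components of the graph of the strict Henselization. The difficulty is that the execution of (b) and (c) -- exactly the part you call the engine -- has genuine gaps. In (b) you bound $\dim_k\Ext^c_R(H^c_I(R),k)$ by $l$ and invoke Nakayama, but Nakayama needs $\dim_k B/\mathfrak m B<\infty$, and the natural map $\Ext^c_R(H^c_I(R),R)\otimes_R k\to\Ext^c_R(H^c_I(R),k)$ is only the edge map of the universal-coefficient spectral sequence $\Tor^R_{-p}(\Ext^q_R(H^c_I(R),R),k)\Rightarrow\Ext^{p+q}_R(H^c_I(R),k)$; since $H^c_I(R)$ is not finitely generated and the higher $\Ext^{c+j}_R(H^c_I(R),R)$ need not vanish, this map can have a kernel, so finiteness of the target does not bound the source. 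In (c) the claim $\depth_R B=n$ is asserted but not computed, and I do not see how to extract it from the composition spectral sequence alone. Both gaps are closed at once by the one input you list but never deploy: $H^d_{\mathfrak m}(H^c_I(R))$ is an \emph{injective} $R$-module (Huneke--Sharp, Lyubeznik) with socle of dimension $l<\infty$, hence is isomorphic to $E_R(k)^l$ and in particular Artinian; Matlis duality over the complete ring then gives $B\simeq\Hom_R(E^l,E)\simeq R^l$ directly, which is (b) and (c) simultaneously. This is the paper's route (Lemma 3.4 and Lemma 4.2).

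Two smaller points. First, your justification of the $\mathfrak m$-adic completeness of $B$ in (a) (``$B$ has an inverse-limit presentation'') is too weak: in the Gorenstein case $B$ need not be $R$-finite, and an inverse limit is complete only for its inverse-limit topology; the paper instead computes $\varprojlim B/\mathfrak m^\alpha B$ via the identification of $B$ with the Matlis dual of the $\mathfrak m$-torsion module $H^d_{\mathfrak m}(H^c_I(R))$. Second, in (d) the idempotents of $B$ correspond to the connected components of the Hochster--Huneke graph of $R/I_d$ (connectedness in codimension one), not of the punctured spectrum, which is a strictly weaker condition; the paper handles one implication via the indecomposability criterion for canonical modules and the other via the Mayer--Vietoris splitting $H^c_I(R)\simeq\oplus_iH^c_{I_i}(R)$. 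Your (e) is correct as the paper intends it (note the rank in the printed statement should read $\dim_k\Ext^d_R(k,H^c_I(R))$, as you in fact use).
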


By the aid of the result shown by Huneke and Lyubeznik (cf. \cite[Theorem 2.9]{HL}) about the
second Vanishing Theorem of Hartshorne (cf. \cite[Theorem 7.5]{rH3}) we get as a consequence of Theorem \ref{1.1}:

\begin{corollary} \label{1.2} Let $(R, \mathfrak m)$ is a complete $n$-dimensional regular
local ring containing a field. Let $I \subset R$ denote an ideal and $d = \dim R/I \geq 2.$
Then the following conditions are equivalent:
\begin{itemize}
  \item[(i)] The natural homomorphism $R \to \Hom_R(H^c_I(R), H^c_I(R))$ is an isomorphism.
  \item[(ii)] $H^i_I(R) = 0$ for all $i \geq n - 1.$
\end{itemize}
\end{corollary}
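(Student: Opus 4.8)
The plan is to derive Corollary~\ref{1.2} from part~(e) of Theorem~\ref{1.1} by unwinding the connectivity condition. The logical skeleton is: $R \to B$ is an isomorphism $\iff$ the completion of the strict Henselization of $R/I_d$ is connected in codimension one $\iff$ (by the Huneke--Lyubeznik form of the second Vanishing Theorem) $H^i_I(R) = 0$ for all $i \geq n-1$. So the work is entirely on the right-hand equivalence.

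First I would recall precisely what \cite[Theorem 2.9]{HL} says. For a complete regular local ring $R$ containing a field and an ideal $I$ with $d = \dim R/I \geq 2$, the second Vanishing Theorem asserts that $H^{n-1}_I(R) = 0$ if and only if $\dim R/I \geq 2$ and the punctured spectrum of (the completion of the strict Henselization of) $R/I_d$ is connected — equivalently $V(I_d)$ is formally geometrically connected in codimension one, i.e.\ the completion of the strict Henselization of $R/I_d$ is connected in codimension one. I would then note that $H^n_I(R) = 0$ automatically: since $d > 0$ we have $I \neq \mathfrak m$, so $\height I < n$ is not enough by itself, but one uses that $H^i_I(R) = 0$ for $i > \dim R - \cd$-type bounds; more directly, by Grothendieck's vanishing or by the fact that $H^n_I(R) \cong H^n_I(R)$ localizes and the cohomological dimension of $I$ in an $n$-dimensional regular ring is $n$ only when $\operatorname{Rad} I = \mathfrak m$, which fails since $d \geq 2$. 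Hence for $i \geq n-1$ the condition ``$H^i_I(R) = 0$ for all such $i$'' collapses to the single condition $H^{n-1}_I(R) = 0$.

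Putting these together: by Theorem~\ref{1.1}(e), condition~(i) holds iff the completion of the strict Henselization of $R/I_d$ is connected in codimension one; by \cite[Theorem 2.9]{HL} this last statement is equivalent (using $d \geq 2$) to $H^{n-1}_I(R) = 0$; and since $H^n_I(R) = 0$ always in this setting, that is equivalent to condition~(ii). This gives the chain (i)$\iff$(ii).

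The main obstacle I anticipate is matching the exact hypotheses of the second Vanishing Theorem with the connectivity notion appearing in Theorem~\ref{1.1}(e). One must be careful that \cite{HL} may phrase the conclusion in terms of the number of connected components of a punctured spectrum, or of formal connectedness, rather than verbatim ``connected in codimension one of the completion of the strict Henselization''; reconciling these — checking that ``connected in codimension one'' for the relevant complete strictly Henselian local ring is literally the same as the punctured-spectrum connectivity hypothesis in the Huneke--Lyubeznik statement (both being equivalent, for a $d$-dimensional such ring with $d \geq 2$, to the vanishing of a certain top local cohomology) — is where the care is needed. The remaining steps are formal once this dictionary is in place.
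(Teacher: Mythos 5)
Your route is exactly the paper's: the paper derives Corollary~\ref{1.2} in one line by combining Theorem~\ref{1.1}(e) with \cite[Theorem 2.9]{HL}, and your reduction of condition (ii) to the single vanishing $H^{n-1}_I(R)=0$ via the Hartshorne--Lichtenbaum theorem (available since $R$ is a complete domain and $d\geq 2>0$) is the right bookkeeping. The problem is precisely the step you flag as ``where the care is needed'': the dictionary you need does not exist. The second Vanishing Theorem of \cite{HL} characterizes $H^{n-1}_I(R)=0$ by connectedness of the \emph{punctured spectrum} of the completion of the strict Henselization of $R/I$, whereas Theorem~\ref{1.1}(e) (equivalently $l=\dim_k\Ext_R^d(k,H^c_I(R))=1$, via the Lyubeznik--Zhang theorem on the Hochster--Huneke graph) requires connectedness \emph{in codimension one}. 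For $d=2$ these coincide: two distinct $2$-dimensional components meeting outside the closed point necessarily meet along a curve, i.e.\ in codimension one. For $d\geq 3$ the punctured-spectrum condition is strictly weaker, so your parenthetical claim that the two notions are ``literally the same,'' both being detected by the same top local cohomology vanishing, is false, and the chain of equivalences breaks.

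Concretely, take $R=k[|x_1,\dots,x_6|]$ with $k$ algebraically closed, $P_1=(x_1,x_2,x_3)$, $P_2=(x_1,x_4,x_5)$ and $I=P_1\cap P_2$, so $n=6$ and $c=d=3$. The Mayer--Vietoris sequence gives $H^3_I(R)\simeq H^3_{P_1}(R)\oplus H^3_{P_2}(R)$, $H^4_I(R)\simeq H^5_{P_1+P_2}(R)\neq 0$ and $H^5_I(R)=H^6_I(R)=0$, so condition (ii) holds. But $\height(P_1+P_2)=5$, so the two components meet in codimension two in $V(I)$, which is therefore connected on the punctured spectrum but \emph{not} connected in codimension one; correspondingly $\dim_k\Ext_R^3(k,H^3_I(R))=1+1=2$ and $B\simeq R^2$, so condition (i) fails. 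Hence the implication (ii)$\Rightarrow$(i) cannot be obtained along these lines for $d\geq 3$ (the converse (i)$\Rightarrow$(ii) is fine, since connectedness in codimension one does imply connectedness of the punctured spectrum). This gap is equally present in the paper's own one-line derivation; your argument is complete only in the case $d=2$, and you should either restrict to that case or say explicitly that the needed equivalence of connectivity notions is being assumed.
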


In the general case of a Gorenstein ring it is not true (cf. \ref{3.5}) true that
$B$ is a module finite extension, while -- in this example -- it is a Noetherian ring. We conjecture that $B$ is always a Noetherian ring (cf. \ref{3.5}).

Another feature of our considerations here is the the following natural map
\[
\phi : \Ext_R^d(k, H^c_I(R)) \to k,
\]
where $I \subset R$ is an ideal of $\height I = c$ and $d = \dim R/I.$ The homomorphisms
$\phi$ occurs as an edge homomorphism of a certain spectral sequence resp. as a homomorphism
in the construction of the truncation complex (cf. Definition \ref{5.2} and Lemma \ref{6.1}).
In \cite[Conjecture 2.7]{HeS} Hellus and the author conjectured that it is always non-zero. Here we show that $\phi$ is related to the homomorphism $\lambda : \Ext_R^d(k, H^c_I(R)) \to H^d_{\mathfrak m}(H^c_I(R)). $ As an application of our techniques there is the following result:

\begin{theorem} \label{1.3} Let $(R, \mathfrak m)$ be an $n$-dimensional Gorenstein ring.
Let $I \subset R$ denote an ideal with $c = \height I$ and $d= \dim R/I.$ Then:
\begin{itemize}
  \item[(a)] If $\phi : \Ext_R^d(k, H^c_I(R)) \to k$ is non-zero, then $\lambda : \Ext_R^d(k, H^c_I(R))
   \to  H^d_{\mathfrak m}(H^c_I(R))$ is non-zero.
\end{itemize}
Suppose that $(R, \mathfrak m)$ is a complete regular local ring containing a field. Then:
\begin{itemize}
  \item[(b)] The homomorphism  $\phi : \Ext_R^d(k, H^c_I(R)) \to k$ is non-zero.
\end{itemize}
\end{theorem}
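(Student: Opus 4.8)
The overall plan is to derive part (a) formally from the way $\phi$ is built out of the truncation complex, and then to obtain part (b) by combining (a) with Theorem~\ref{1.1} and the finiteness theorems available over a regular ring containing a field.

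For part (a), I would start from the description of $\phi$ via the truncation complex (cf. Definition~5.2 and Lemma~6.1). Writing $t\colon H^c_I(R)[-c] \to R\Gamma_I(R)$ for the truncation morphism sitting in the distinguished triangle $H^c_I(R)[-c] \xrightarrow{t} R\Gamma_I(R) \to C^\bullet \to H^c_I(R)[-c+1]$, the map $\phi$ is $\mathcal H^n$ applied to $\RHom_R(k,t)$, after the identifications $\RHom_R(k, R\Gamma_I(R)) \simeq \RHom_R(k, R) \simeq k[-n]$ (here $k$ is $\mathfrak m$-torsion, hence $I$-torsion, and $R$ is Gorenstein of dimension $n$; note that a Gorenstein local ring is catenary and equidimensional, so $n-c=d$ and the source is $\Ext_R^d(k, H^c_I(R))$). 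Carrying out the same procedure with $\Rgam(-)$ in place of $\RHom_R(k,-)$ and using $\Rgam R\Gamma_I(R) \simeq \Rgam(R) \simeq E[-n]$, where $E$ is the injective hull of $k$, yields a map $\alpha\colon H^d_{\mathfrak m}(H^c_I(R)) \to E$. Now $\lambda$ is, by construction, induced on cohomology by the natural transformation $\RHom_R(k,-) \to \Rgam(-)$ coming from the maps $R/\mathfrak m \to R/\mathfrak m^t$ in the colimit; applying this natural transformation to the morphism $t$ and to the identifications above produces a commutative square
\[
\begin{array}{ccc}
\Ext_R^d(k, H^c_I(R)) & \xrightarrow{\ \phi\ } & k\\[3pt]
\lambda\downarrow & & \downarrow\\[3pt]
H^d_{\mathfrak m}(H^c_I(R)) & \xrightarrow{\ \alpha\ } & E,
\end{array}
\]
in which the right vertical arrow is the socle inclusion $\operatorname{soc} E \hookrightarrow E$ under the Gorenstein identification $\Ext_R^n(k,R) \cong \operatorname{soc} E$. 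Since that arrow is injective, $\phi \neq 0$ implies $\alpha\circ\lambda \neq 0$, hence $\lambda \neq 0$, which is (a). The only delicate point here is checking that $\lambda$ really underlies a natural transformation of the relevant hyper-derived functors compatible with $t$; once this is in place, (a) is a diagram chase.

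For part (b), I would use that, since $R$ is a complete regular local ring containing a field, the theorems of Huneke--Sharp in characteristic $p$ and of Lyubeznik in characteristic $0$ apply: $H^c_I(R)$ has finite Bass numbers (so $\Ext_R^d(k, H^c_I(R))$ is finite dimensional over $k$), and every $H^i_{\mathfrak m}(H^c_I(R))$ is an injective $R$-module, in particular $H^d_{\mathfrak m}(H^c_I(R))$ is Artinian. By Theorem~\ref{1.1}(a) and (c), $B \simeq \Ext_R^c(H^c_I(R), R)$ is a free $R$-module of rank $l \geq 1$; as $1_B \notin \mathfrak m B$, the structure map $R \to B$ is a split injection of $R$-modules. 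Dualizing via local duality gives $H^d_{\mathfrak m}(H^c_I(R)) \simeq B^{\vee} \simeq E^{\,l}$, and, using the compatibility of the truncation complex with local duality, identifies the map $\alpha$ of part (a) with the Matlis dual of $R\to B$, which is therefore split surjective. It then remains to run the square of part (a) through Matlis duality: $\phi^{\vee}$ becomes a map $k \to \Ext_R^d(k, H^c_I(R))^{\vee}$, and, because $H^{\bullet}_{\mathfrak m}(H^c_I(R))$ is injective, the dualized square collapses and exhibits $\phi^{\vee}$ as the inclusion $k = R\otimes_R k \hookrightarrow B\otimes_R k$ of the free direct summand, which is nonzero. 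Hence $\phi \neq 0$.

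The step I expect to be the main obstacle is precisely this last identification in (b): transporting $\phi$, which is defined cohomologically through the truncation complex, across Matlis (and local) duality and matching it with the reduction modulo $\mathfrak m$ of the structure homomorphism $R \to B$. Merely knowing that $\alpha$ is surjective does not suffice, since a priori $\phi$ only detects the image of $\lambda$ inside $\operatorname{soc} H^d_{\mathfrak m}(H^c_I(R))$, which could be proper; it is the finiteness and injectivity statements of Lyubeznik and Huneke--Sharp that make the dualized configuration rigid enough to conclude. Part (a), in contrast, is essentially formal once the naturality of $\lambda$ relative to the truncation triangle is secured.
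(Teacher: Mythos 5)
Your proposal is correct and follows essentially the same route as the paper's proof: part (a) is exactly the commutative square built from the truncation complex (the paper's $\psi$ is your $\alpha$, and the vertical map $k\hookrightarrow E$ is indeed the socle inclusion), and part (b) rests on the same two pillars, namely that the injectivity of $H^\bullet_{\mathfrak m}(H^c_I(R))$ forces the socle map $\bar\lambda\colon \Ext_R^d(k,H^c_I(R))\to\Hom_R(k,H^d_{\mathfrak m}(H^c_I(R)))$ to be an isomorphism, and that the reduction mod $\mathfrak m$ of the structure map $R\to B=\Hom_R(H^d_{\mathfrak m}(H^c_I(R)),E)$ is nonzero. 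The only differences are cosmetic. The paper applies $\Hom_R(k,\cdot)$ to the diagram and then Matlis-dualizes just the bottom arrow $\bar\psi$, whereas you Matlis-dualize the whole square and then reduce mod $\mathfrak m$; these amount to the same chase. Also, where you invoke Theorem~\ref{1.1}(c) (freeness of $B$) and the resulting split injection $R\to B$, the paper gets by with the lighter observation from Theorem~\ref{3.1}(c) that $R\to B$ is a ring homomorphism, so $1\mapsto 1_B\neq 0$ in $B/\mathfrak m B$; your extra input is available under the standing hypotheses but isn't needed at this step. One point worth making fully explicit in your write-up is that the identification of $\alpha^\vee$ with the structure map $R\to B$ (equivalently, that $\bar\lambda^\vee$ is an isomorphism onto $B/\mathfrak m B$) is precisely where the degenerate spectral sequence $\Ext_R^p(k,H^q_{\mathfrak m}(H^c_I(R)))\Rightarrow\Ext_R^{p+q}(k,H^c_I(R))$, i.e.\ Remark~\ref{4.1}(c), is being used; you gesture at this via ``because $H^\bullet_{\mathfrak m}(H^c_I(R))$ is injective,'' which is the right reason, but the compatibility of this isomorphism with $\psi$ is the one piece of naturality that should be spelled out rather than asserted.
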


In a certain sense (cf. Remark \ref{6.3}) one might consider the homomorphism $\lambda$ as the
limit version of the homomorphism $\Ext_R^d(k, K(R/I^{\alpha})) \to H^d_{\mathfrak m}(K(R/I^{\alpha}))$ as it was studied by Hochster (cf. \cite[Section 4]{mH}). Here $K(R/I^{\alpha})$ denotes the canonical module of $R/I^{\alpha}, \alpha \in \mathbb N.$ For the proof of Theorem \ref{1.3} we refer to Section 6 and Theorem \ref{6.2}.

As another application of our technique we proof a slight sharpening of Blickle's result (cf.
\cite[Theorem 1.1]{mB}) about a certain duality for the Lyubeznik numbers. In the second Section of the paper there are preliminaries and auxiliary results needed in the sequel. In the third section we investigate the endomorphism ring of $H^c_I(R), c = \height I,$ in the case of $R$ a Gorenstein ring and in Section four in the case of $R$ a regular ring containing a field. To this end me make use of the deep results of Huneke and Sharp (cf. \cite{HS}) in the case of prime characteristic $p > 0$ and of Lyubeznik (cf. \cite{gL1}) in characteristic zero. In Section 5 there are some additional results about the so-called Lyubeznik numbers. In Section 6 we study the natural homomorphism $\phi : \Ext_R^d(k, H^c_I(R)) \to k.$ In the final section 7 we discuss some additional examples.

\section{Preliminaries and auxiliary results}
In this section we will summarize a few results
about completions of local rings and inverse limits as well as some
statements about the canonical module of a module. They are
needed for further constructions related to local cohomology
modules and their asymptotic behavior. With our notation
we follow the textbook \cite{hM}.

Let $\{I_i\}_{i \in \mathbb N}$ denote a family of ideals of a
Noetherian ring $R$ such that $I_{i+1} \subset I_i$ for all $i
\geq 1.$ With the natural epimorphism $R/I_{i+1} \to R/I_i$ the
family $\{R/I_i\}_{i \in \mathbb N}$ forms an inverse system. Its
inverse limit $\varprojlim R/I_i$ is given by
\[
\varprojlim R/I_i \simeq \{ (b_i+I_i)_{i \in \mathbb N} :
b_i \in R, b_{i+1} - b_i \in I_i \text{ for all } i \in \mathbb N\}.
\]
In the following result there is a summary of some technical constructions.

\begin{lemma} \label{2.1} Let $(R,\mathfrak m)$ denote a
complete local ring. Let $\{I_i\}_{i \in \mathbb N}$ denote
a descending sequence of ideals of $R.$
\begin{itemize}
\item[(a)] Assume that $\cap_{i \in \mathbb N} I_i = 0.$ Then
for any $i \in \mathbb N$ there exists an integer $j = j(i) \geq
i$ such that $j(i+1) > j(i)$ and $I_{j(i)} \subset \mathfrak m^i.$
\item[(b)] Suppose that for all $i \in \mathbb N$ there is an integer $j =
j(i)$ with $j \geq i, j(i+1) > j(i),$ and $I_j \subset \mathfrak
m^i.$ Then $\varprojlim R/I_i \simeq R.$
\end{itemize}
\end{lemma}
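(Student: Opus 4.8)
The statement has two parts, and part (b) is the substantive one; part (a) is an easy consequence of the Krull intersection theorem together with a bookkeeping argument. Let me sketch both.

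For part (a): since $(R,\mathfrak m)$ is Noetherian and complete (in particular $\mathfrak m$-adically separated), and since $\cap_{i} I_i = 0$, I would argue that for each fixed $i$ there must be some index $N$ with $I_N \subset \mathfrak m^i$. This is the key point: the descending chain $\{I_j\}_j$ has zero intersection, so by the Artin–Rees / Chevalley-type statement (in a complete semilocal Noetherian ring, a descending sequence of ideals with zero intersection is cofinal with the powers of $\mathfrak m$ — this is Chevalley's lemma) we get such an $N$. Granting this, define $j(i)$ recursively: pick $j(1)$ with $I_{j(1)} \subset \mathfrak m^1$ and $j(1) \ge 1$; having chosen $j(i)$, use the statement again to find $N$ with $I_N \subset \mathfrak m^{i+1}$ and set $j(i+1) = \max\{N, j(i)+1, i+1\}$. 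Since the $I_j$ are descending, $I_{j(i+1)} \subset I_N \subset \mathfrak m^{i+1}$, and the inequalities $j(i+1) > j(i)$ and $j(i+1) \ge i+1$ hold by construction. The only genuine input is Chevalley's lemma; I would either cite it or give the short proof via Noetherianness of the associated graded.

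For part (b): the hypothesis says the sequence $\{I_i\}$ is cofinal with $\{\mathfrak m^i\}$ in one direction — each $\mathfrak m$-adic neighborhood eventually contains some $I_j$. Conversely $I_i \subset \mathfrak m^{?}$ need not hold, but that is irrelevant: cofinality of inverse systems only requires a map of index sets in one direction together with compatibility. Concretely, the inclusions $I_{j(i)} \subset \mathfrak m^i$ induce surjections $R/I_{j(i)} \to R/\mathfrak m^i$, and the inclusions $\mathfrak m^{j(i)} \subset I_{i}$ — wait, that direction is not given. Let me reconsider: actually we need maps in both directions to conclude the limits agree. The hypothesis gives $R/I_{j(i)} \twoheadrightarrow R/\mathfrak m^i$. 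For the other direction I would use that $R$ is Noetherian, so each $I_i$ is finitely generated, hence $\mathfrak m^{s(i)} \subset I_i$ for some $s(i)$ is \emph{false} in general — but we don't need containment of a power of $\mathfrak m$ in $I_i$; we only need that the $I_i$ form a system of ideals, giving $\varprojlim R/I_i$, and then that the natural map $\varprojlim R/I_i \to \varprojlim R/\mathfrak m^i = \hat R = R$ is an isomorphism. The hypothesis makes $\{I_{j(i)}\}$ a subsystem cofinal with $\{\mathfrak m^i\}$, so $\varprojlim R/I_i = \varprojlim R/I_{j(i)}$, and the cofinal interleaving $I_{j(i)} \subset \mathfrak m^i$ together with $\mathfrak m^{j(i)} \subset \mathfrak m^i$... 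I think the cleanest route is: the natural $R$-algebra map $R \to \varprojlim R/I_i$ has kernel $\cap_i I_i$, which is contained in $\cap_i \mathfrak m^i = 0$ by the hypothesis (taking $i \to \infty$ in $I_{j(i)} \subset \mathfrak m^i$), so the map is injective; and it is surjective because given a compatible family $(b_i + I_i)$, the containments $I_{j(i)} \subset \mathfrak m^i$ show that $(b_{j(i)})_i$ is a Cauchy sequence in the complete ring $R$, whose limit maps to the given family. That is the argument I would write out.

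The main obstacle is purely one of careful bookkeeping with the two interleaved index functions $i \mapsto j(i)$ and the verification that the constructed element of $R$ is independent of choices and genuinely maps to the prescribed compatible family; none of it is deep, but it is the kind of thing where an off-by-one error in the recursion for $j(i)$ in part (a) would break the monotonicity requirement $j(i+1) > j(i)$. I would therefore state the recursion explicitly as above, taking a maximum at each stage to force strict monotonicity, and for part (b) reduce immediately to the two assertions ``injective because $\cap I_i = 0$'' and ``surjective because Cauchy sequences converge,'' each of which is a two-line argument once the cofinality is noted.
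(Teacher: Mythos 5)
Your overall strategy coincides with the paper's: for (a) the only real input is Chevalley's lemma (your opening attribution to the Krull intersection theorem is a mislabel, but you immediately correct it), followed by the recursive choice of $j(i)$; the paper simply cites Chevalley and suppresses the recursion, so your explicit $j(i+1)=\max\{N,\, j(i)+1,\, i+1\}$ is if anything more careful. For (b) both you and the paper study the natural map $R \to \varprojlim R/I_i$: injectivity because the kernel $\cap_i I_i$ sits inside $\cap_i \mathfrak m^i = 0$, and surjectivity by observing that $(b_{j(i)})_i$ is an $\mathfrak m$-adic Cauchy sequence in the complete ring $R$.

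There is, however, one step you wave at that is precisely the non-formal content of (b). Writing $b$ for the limit of $(b_{j(i)})_i$, completeness only gives $b - b_{j(i)} \in \mathfrak m^i$; to conclude that $b$ maps onto the prescribed compatible family you must show $b - b_{j(l)} \in I_{j(l)}$ for every $l$. Combining $b - b_{j(i)} \in \mathfrak m^i$ with $b_{j(i)} - b_{j(l)} \in I_{j(l)}$ for $i \geq l$ yields only $b - b_{j(l)} \in \mathfrak m^i + I_{j(l)}$ for all $i \geq l$, i.e.\ $b - b_{j(l)}$ lies in the $\mathfrak m$-adic closure of $I_{j(l)}$. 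That this closure equals $I_{j(l)}$ is the Krull intersection theorem (applied in $R/I_{j(l)}$), and this is exactly the tool the paper invokes at this point; your plan never mentions it for part (b). This is not a wrong approach --- it is the paper's approach with its one essential ingredient for surjectivity left unstated --- but ``whose limit maps to the given family'' and ``Cauchy sequences converge'' do not by themselves deliver the conclusion, so the closedness-of-ideals step must be added explicitly.
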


\begin{proof} The statement in (a) is a consequence of Chevalley's
Lemma (cf. e.g. \cite[Theorem 2.1]{pS1}).

For the proof of (b) let $(b_i+I_i)_{i \in \mathbb N} \in \varprojlim
R/I_i$ denote a given element. Therefore $b_{i+1} - b_i \in I_i$
for all $i \in \mathbb N.$ By view of the assumption it turns out that for every
$i \in \mathbb N$ there is an $j = j(i)$ such that $I_{j(i)} \subset
\mathfrak m^i.$ Therefore we see that $b_{j(i+1)} - b_{j(i)} \in
I_{j(i)} \subset \mathfrak m^i$ for all $i \in \mathbb N.$ that is,
\[
(b_{j(i)} + \mathfrak m^i)_{i \in \mathbb N} \in \varprojlim
R/\mathfrak m^i \simeq R
\]
because $R$ is complete with respect to the $\mathfrak m$-adic
topology. That is, $(b_{j(i)} + \mathfrak m^i)_{i \in \mathbb
N}$ defines an element $b \in R.$ Consequently  $b_{j(i)} -
b \in \mathfrak m^i$ for all $i \in \mathbb N.$ Now fix $l \in
\mathbb N$ and choose $i \geq l.$ Then $b_{j(i)} - b_{j(l)} \in
I_{j(l)}$ as it follows by induction since $j(i) \geq j(l).$ So
there is the following relation $b - b_{j(l)} \in
(\mathfrak m^i, I_{j(l)})$ for all $i \geq l.$ By the Krull
Intersection Theorem it provides that $b - b_{j(l)} \in I_{j(l)}$ for
any $l \geq 1.$ But this means nothing else but
\[
b = (b + I_{j(l)})_{l \in \mathbb N} = (b_{j(l)} + I_{j(l)})_{l \in
\mathbb N} \in \varprojlim R/I_{j(l)} \simeq \varprojlim R/I_i,
\]
so that $( b_i + I_i)_{i \in \mathbb N} \in R$ and $\varprojlim R/I_i \simeq R,$
as required.
\end{proof}

For an application we have to know whether the inverse limit of an
inverse system of rings is a quasi-local ring. Here a commutative ring is
called quasi-local provided there is a unique maximal ideal.

\begin{lemma} \label{2.2} Let $\{(B_i,\mathfrak n_i)\}_{i \in
\mathbb N}$ denote an inverse system of local rings such that
\[
\phi_{i+1} : (B_{i+1}, \mathfrak n_{i+1}) \to (B_i, \mathfrak
n_i), i \in \mathbb N,
\]
is a local homomorphism. Then
\[
B : = \varprojlim (B_i, \mathfrak n_i) \simeq \{( b_i)_{i \in
\mathbb N} : b_i \in B_i, \phi_{i+1}(b_{i+1}) = b_i \text{ for all
} i \in \mathbb N \}
\]
is a quasi-local ring.
\end{lemma}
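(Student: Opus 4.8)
The plan is to describe the maximal ideal of $B$ explicitly and to verify that every element outside it is a unit, which is the standard way to see that a ring is quasi-local.

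First I would set $\mathfrak n := \{ (b_i)_{i \in \mathbb N} \in B : b_i \in \mathfrak n_i \text{ for all } i \in \mathbb N \}$. This is visibly an ideal of $B$, being the kernel of the composite ring homomorphism $B \to B_i \to B_i/\mathfrak n_i$ for each $i$; in fact, since the $\phi_{i+1}$ are local, the residue field maps $B_i/\mathfrak n_i \to B_{i-1}/\mathfrak n_{i-1}$ are defined, and $B/\mathfrak n$ embeds into $\varprojlim B_i/\mathfrak n_i$. The key point is to check that $\mathfrak n$ is the unique maximal ideal, equivalently that $B \setminus \mathfrak n \subseteq B^{\times}$.

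So let $(b_i)_{i \in \mathbb N} \in B$ with $(b_i) \notin \mathfrak n$; I must show it is a unit. There is some index $i_0$ with $b_{i_0} \notin \mathfrak n_{i_0}$. Because the transition maps are local homomorphisms, $\phi_{i+1}(b_{i+1}) = b_i$ together with $b_{i+1} \in \mathfrak n_{i+1}$ would force $b_i \in \mathfrak n_i$; contrapositively, $b_{i_0} \notin \mathfrak n_{i_0}$ propagates \emph{upward}, so $b_i \notin \mathfrak n_i$ for all $i \geq i_0$, hence (since each $B_i$ is local) $b_i$ is a unit in $B_i$ for all $i \geq i_0$. For $i < i_0$ we have $b_i = \phi_{i+1}(\cdots \phi_{i_0}(b_{i_0}))$, the image of a unit under ring homomorphisms, so $b_i$ is a unit in $B_i$ for every $i$. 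Now form $c_i := b_i^{-1} \in B_i$. Applying $\phi_{i+1}$ to $b_{i+1} c_{i+1} = 1$ gives $b_i \,\phi_{i+1}(c_{i+1}) = 1$, and by uniqueness of inverses in $B_i$ this yields $\phi_{i+1}(c_{i+1}) = c_i$; thus $(c_i)_{i \in \mathbb N} \in B$ and it is a two-sided inverse of $(b_i)_{i \in \mathbb N}$. Hence every element outside $\mathfrak n$ is a unit, so $\mathfrak n$ is the unique maximal ideal and $B$ is quasi-local.

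I do not anticipate a serious obstacle here; the only thing to be careful about is the direction in which "being a non-unit" propagates along the inverse system — it is the locality of the transition maps that makes non-units at level $i_0$ push forward to non-units at all higher levels, and this is exactly what guarantees $\mathfrak n$ is a proper ideal and absorbs every non-unit. A minor point worth a sentence is that $\mathfrak n \neq B$, which is immediate since $1 = (1)_{i}$ has $1 \notin \mathfrak n_i$.
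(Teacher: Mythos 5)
Your proof is correct and follows essentially the same route as the paper's: both arguments characterize the units of $B$ as exactly the compatible tuples all of whose components are units (constructing the inverse tuple via uniqueness of inverses), and then identify the complement with the ideal $\{(b_i): b_i \in \mathfrak n_i \text{ for all } i\}$, using the locality of the transition maps to propagate membership in the maximal ideals across all levels.
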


\begin{proof} It is easily seen that $B$ admits the structure of a
commutative ring with identity element $(1)_{i \in \mathbb N} \in B.$ Let
$(b_i)_{i \in \mathbb N} \in B$ denote a unit. By definition there
exists an element $(a_i)_{i \in \mathbb N} \in B$ such that
\[
(a_ib_i)_{i \in \mathbb N} = (1)_{i \in \mathbb N}.
\]
This means $a_ib_i = 1$ for all $i \in \mathbb N,$ so that $b_i$
is a unit in $(B_i, \mathfrak n_i)$ for all $i \in \mathbb N.$ On
the other hand let $(b_i)_{i \in \mathbb N} \in B$ denote an
element such that for all $i \in \mathbb N$ the element $b_i \in
B_i$ is a unit. Then there is a sequence $a_i \in B_i, i \in
\mathbb N,$ of elements such that $a_i b_i = 1.$ We claim that
$(a_i)_{i \in \mathbb N} \in B.$ To this end we have to show that
$\phi_{i+1}(a_{i+1}) = a_i$ for all $i \in \mathbb N.$ Because
$\phi_{i+1}$ is a homomorphism of rings
\[
1 = \phi_{i+1}(a_{i+1}) \phi_{i+1}(b_{i+1}) = \phi_{i+1}(a_{i+1})
b_i {\text{ and }} 1 = a_i b_i,
\]
so that $\phi_{i+1}(a_{i+1}) = a_i$ because $b_i$ is a unit in $B_i.$

Now let $(b_i)_{i \in \mathbb N} \in B$ be an element such that
$b_j \in B_j$ is not a unit for some $j \in \mathbb N.$ Then $b_j
\in \mathfrak n_j$ and $b_i \in \mathfrak n_i$ for all $i \in
\mathbb N$ as easily seen since $\phi_i$ is a local homomorphism
of local rings for all $i \in \mathbb N.$ Therefore
\[
B \setminus B^{\star} = \{(b_i)_{i \in \mathbb N} \in B : b_i \in
\mathfrak n_i {\text{ for all }} i \in \mathbb N \},
\]
where $B^{\star}$ denotes the set of units of $B.$ Moreover,
$\{(b_i)_{i \in \mathbb N} \in B : b_i \in \mathfrak n_i \text{
for all } i \in \mathbb N \}$ is an ideal of $B.$ Because
$B\setminus B^{\star}$ is equal to the union of maximal ideals of
$B$ it follows that $B \setminus B^{\star}$ itself is a maximal
ideal. But this means nothing else but $B$ is a quasi-local ring.
\end{proof}

Now let $(R,\mathfrak m)$ denote a local
Gorenstein ring with $n = \dim R.$ Let $M$ be a finitely generated
$R$-module and $d = \dim M.$ Define
\[
K^i(M) := \Ext_R^{n-i}(M,R), i \in \mathbb Z,
\]
the $i$-th module of deficiency. For $i = d$ let
\[
K(M) = K^d(M) = \Ext_R^c(M,R), c = n-d,
\]
denote the canonical module of $M.$ Let $H^i_{\mathfrak m}(\cdot)$ denote the
$i$-th local cohomology functor with support in $\mathfrak m.$ By
the Local Duality Theorem (see e.g. \cite{aG} or \cite[Theorem
1.8]{pS1}) there are the following functorial isomorphisms
\[
H^i_{\mathfrak m}(M) \simeq \Hom_R(K^i(M), E), i \in \mathbb Z,
\]
where $E = E_R(R/\mathfrak m)$ denotes the injective hull of
$k = R/\mathfrak m,$ the residue field.

For some basic properties  about the modules of deficiency we refer to
\cite{aG} and \cite[Lemma 1.9]{pS2}.
In particular, $\Ann_R K(M) = (\Ann _R M)_d, $ the intersection of all
the $\mathfrak p$-primary components of $\Ann_R M$ such that $\dim
R/\mathfrak p = d.$ Moreover $K(M)$ satisfies Serre's condition $S_2.$

For an $R$-module $M$ we will consider $K(K(M)).$ To this end we need
the following construction.

\begin{proposition} \label{2.3} There is a  canonical isomorphism
\[
K(K(M)\otimes M) \simeq \Hom(K(M), K(M)) \simeq \Hom(M, K(K(M)))
\]
for a finitely generated $R$-modules $M.$
\end{proposition}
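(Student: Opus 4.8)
The plan is to reduce the statement to derived Hom–tensor adjunction together with two collapsing spectral sequences. Put $N := K(M)$ and $c = n - d$. First I would record two vanishing facts. Since $R$ is Gorenstein, $\Ext_R^i(L, R) = 0$ for every finitely generated $L$ and every $i < n - \dim L$ (a basic property of the modules of deficiency); in particular $\Ext_R^i(M,R) = 0$ for $i < c$, and since $\Ann_R K(M) = (\Ann_R M)_d$ gives $\dim K(M) = d$, also $\Ext_R^i(N, R) = 0$ for $i < c$. Secondly, $\Supp \Tor_q^R(N, M) \subseteq \Supp N \cap \Supp M = V\bigl((\Ann_R M)_d\bigr)$ for every $q \ge 0$, a set of dimension $d$; hence $K(M) \otimes M = \Tor_0^R(N, M)$ has dimension $d$, so that $K(K(M)\otimes M) = \Ext_R^c(K(M)\otimes M, R)$, and $\Ext_R^p(\Tor_q^R(N, M), R) = 0$ for all $p < c$ and all $q$.

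Next I would invoke the canonical adjunction isomorphisms in the derived category,
\[
\RHom_R(N \lo_R M, R) \simeq \RHom_R\bigl(M, \RHom_R(N, R)\bigr) \simeq \RHom_R\bigl(N, \RHom_R(M, R)\bigr),
\]
all functorial in $M$ (the second one via the symmetry $N \lo_R M \simeq M \lo_R N$), and then compute $H^c$ of each complex. For the left-hand complex the hyper-Ext spectral sequence $E_2^{p,q} = \Ext_R^p(\Tor_q^R(N, M), R) \Rightarrow H^{p+q}\RHom_R(N \lo_R M, R)$ has, by the second vanishing fact, all terms with $p < c$ equal to zero; hence on the line $p+q = c$ only $E_2^{c,0} = \Ext_R^c(N \otimes_R M, R) = K(K(M)\otimes M)$ is nonzero, and it is a permanent cycle (no nonzero differential can enter or leave it), so $H^c$ of the left-hand complex is canonically $K(K(M)\otimes M)$. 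For the middle complex, the first vanishing fact says $\RHom_R(N, R)$ has cohomology concentrated in degrees $\ge c$ with $H^c = \Ext_R^c(N, R) = K(K(M))$; the spectral sequence $E_2^{p,q} = \Ext_R^p\bigl(M, H^q\RHom_R(N, R)\bigr)$ is then concentrated in $q \ge c$, so on the line $p+q = c$ only $E_2^{0,c} = \Hom_R(M, K(K(M)))$ survives, again as a permanent cycle, giving $H^c \simeq \Hom_R(M, K(K(M)))$. The identical argument for the right-hand complex — where now $\RHom_R(M, R)$ has cohomology in degrees $\ge c$ with $H^c = K(M)$, and one applies $\RHom_R(N, -)$ — yields $H^c \simeq \Hom_R(N, K(M)) = \Hom_R(K(M), K(M))$. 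Chaining the three identifications along the adjunction isomorphisms produces the two asserted isomorphisms, each canonical because every step is.

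The part that still needs care is the spectral-sequence bookkeeping — checking that on the total-degree-$c$ line exactly one $E_2$-term survives and that it supports no nonzero differential in either direction — together with the verification that the edge homomorphisms realizing the three identifications are the expected natural maps (evaluation, respectively the biduality morphism $M \to K(K(M))$), so that the composite is genuinely canonical. I do not anticipate a serious obstacle: the only structural input is the standard vanishing of $\Ext_R^i(L, R)$ for $i < n - \dim L$ over a Gorenstein ring, and everything else is formal homological algebra.
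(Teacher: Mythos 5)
Your proof is correct, and the bookkeeping you flagged as needing care does in fact go through: with the cohomological indexing $E_2^{p,q}=\Ext_R^p(\Tor_q^R(N,M),R)\Rightarrow H^{p+q}$, the vanishing $\Ext_R^p(\Tor_q,R)=0$ for $p<c$ (since $\dim\Tor_q\le d$) together with $\Tor_q=0$ for $q<0$ forces $E_2^{p,q}=0$ off the region $p\ge c,\ q\ge 0$, so on the antidiagonal $p+q=c$ only $(c,0)$ survives, and both the incoming differentials from $(c-r,r-1)$ (where $p<c$) and the outgoing ones to $(c+r,1-r)$ (where $q<0$) vanish; the analogous checks for $\RHom_R(M,\RHom_R(N,R))$ and $\RHom_R(N,\RHom_R(M,R))$ are as you describe. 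The paper's route is the same idea stripped of the spectral-sequence apparatus: it applies $\Hom_R(M,-)$ to the minimal injective resolution $R\qism E^{\cdot}$, uses that $\Hom_R(M,E^{\cdot})^i=0$ for $i<c$ to read off the exact sequence $0\to K(M)\to\Hom_R(M,E^{\cdot})^c\to\Hom_R(M,E^{\cdot})^{c+1}$, applies the left-exact functor $\Hom_R(K(M),-)$, and then invokes the (underived) Hom--tensor adjunction $\Hom_R(K(M),\Hom_R(M,E^p))\simeq\Hom_R(K(M)\otimes M,E^p)$; comparing with the definition of $K(K(M)\otimes M)$ gives the first isomorphism, and $\dim K(M)=d$ gives the second by symmetry. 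What the paper's argument buys is that it never leaves the category of complexes with explicit maps, so canonicity is manifest and there is no spectral-sequence edge-map identification to verify; what your argument buys is a cleaner conceptual picture (one derived adjunction, three collapsing spectral sequences) that also makes visible where each vanishing hypothesis is used. The single input both proofs rely on is the Gorenstein vanishing $\Ext_R^i(L,R)=0$ for $i<n-\dim L$, applied to $M$, to $K(M)$, and to $K(M)\otimes M$ (and to the $\Tor$'s in your version).
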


\begin{proof} Let $R \to E^{\cdot}$ denote a minimal injective
resolution of $R.$ Then it induces an exact sequence
\[
0 \to K(M) \to \Hom(M,E^{\cdot})^c \to \Hom(M, E^{\cdot})^{c+1},
\]
where $c = \dim R - \dim M.$ To this end recall that $\Hom_R(M,E^{\cdot})^i = 0$
for all $i < c.$ By applying the functor $\Hom(K(M),
\cdot)$ it yields an exact sequence
\[
0 \to \Hom(K(M),K(M)) \to \Hom(K(M), \Hom(M,E^{\cdot}))^c \to
\Hom(K(M), \Hom(M, E^{\cdot}))^{c+1}.
\]
By the adjunction formula and the definition it provides that
\[
K(K(M) \otimes M) \simeq \Hom(K(M), K(M)).
\]
Because of $\dim K(M) = \dim M$ a similar argument provides the proof
of the second isomorphism.
\end{proof}

Of a particular interest of \ref{2.3} is the case of $R/I = A$
for an ideal $I \subset R.$

\begin{proposition} \label{2.4} Let $I$ denote an ideal of the
Gorenstein ring R. For $A = R/I$ there are the following results:

\begin{itemize}
\item[(a)] $K(K(A)) \simeq \Ext_R^c(\Ext_R^c(A,R),R), c = \dim R -
\dim A.$

\item[(b)] $K(K(A))$ is isomorphic to the endomorphism ring
$\Hom(K(A), K(A))$ which is commutative.

\item[(c)] There is an injection $A/0_d \hookrightarrow \Hom(K(A),
K(A)).$

\item[(d)] $\Hom(K(A), K(A))$ is the $S_2$-fication of $A/0_d.$
\end{itemize}
\end{proposition}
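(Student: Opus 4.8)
The plan is to prove (a)--(d) in turn, each resting on the preceding part and on Proposition~\ref{2.3}. For (a) I would just unwind the definitions: since $\dim K(A) = \dim A = d$ is one of the basic properties of the modules of deficiency recalled above, one has $K(K(A)) = K^{d}(K(A)) = \Ext_R^{n-d}(K(A),R) = \Ext_R^{c}(\Ext_R^{c}(A,R),R)$, which is exactly the asserted formula.

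For (b) I would apply Proposition~\ref{2.3} with $M = A$. The point to observe is that $K(A)\otimes_R A \simeq K(A)$: indeed $\Ann_R K(A) = (\Ann_R A)_d = I_d \supseteq I$, so $K(A)$ is already an $R/I$-module and $K(A)\otimes_R A = K(A)/IK(A) = K(A)$. Proposition~\ref{2.3} then gives the isomorphism $K(K(A)) \simeq \Hom_R(K(A),K(A))$ and simultaneously exhibits $\Hom_R(K(A),K(A))$ as the canonical module of the $d$-dimensional module $K(A)\otimes_R A$; in particular it satisfies $S_2$, hence $S_1$, so it has no embedded primes, and its support is $V(I_d)$. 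To obtain commutativity I would localise at a minimal prime $\mathfrak p$ of $V(I_d)$: since $R$ is equidimensional and catenary, $A_{\mathfrak p}$ is Artinian, $\height \mathfrak p = c$, and $K(A)_{\mathfrak p} = \Ext^c_{R_{\mathfrak p}}(A_{\mathfrak p},R_{\mathfrak p})$ is by local duality the Matlis dual $\Hom_{R_{\mathfrak p}}(A_{\mathfrak p},E_{R_{\mathfrak p}}) = E_{A_{\mathfrak p}}$, whence $\Hom_R(K(A),K(A))_{\mathfrak p} \simeq \operatorname{End}_{A_{\mathfrak p}}(E_{A_{\mathfrak p}}) \simeq A_{\mathfrak p}$ is commutative. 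Because $\Hom_R(K(A),K(A))$ has no embedded primes, the canonical composition-compatible map into the product of these localisations over the minimal primes of $V(I_d)$ is injective, so $\Hom_R(K(A),K(A))$ is a subring of a product of commutative rings and hence commutative.

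For (c) the homothety map $R \to \Hom_R(K(A),K(A))$, $r\mapsto(x\mapsto rx)$, is a ring homomorphism with kernel $\Ann_R K(A) = I_d$, so it factors through an injection $A/0_d = R/I_d \hookrightarrow \Hom_R(K(A),K(A))$. For (d) I would argue that $\Hom_R(K(A),K(A))$ is a finitely generated $R$-module annihilated by $I_d$, hence a module-finite extension of $A/0_d = R/I_d$ via (c), and that it satisfies $S_2$ by (b). Recalling $K(A) \simeq K(R/I_d)$ and that $R/I_d$ is equidimensional with no embedded primes, $K(A)$ is the canonical module of $R/I_d$ and $\Hom_R(K(A),K(A)) = \Hom_{R/I_d}(K(R/I_d),K(R/I_d))$; for such a ring the endomorphism ring of the canonical module is precisely the $S_2$-ification (cf. \cite{aG} and \cite[Lemma~1.9]{pS2} and the references there). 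Alternatively one checks directly that $R/I_d \hookrightarrow \Hom_R(K(A),K(A))$ is an isomorphism in codimension $\le 1$ --- at a prime $\mathfrak p$ of $R/I_d$ of height $\le 1$ the ring $(R/I_d)_{\mathfrak p}$ is Cohen--Macaulay with canonical module $K(A)_{\mathfrak p}$, and the homothety map of a Cohen--Macaulay local ring into the endomorphism ring of its canonical module is an isomorphism --- and then combines this with module-finiteness and $S_2$ to identify it with the $S_2$-ification.

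The step I expect to be the main obstacle is the last one, namely pinning down the precise characterisation of the $S_2$-ification and matching it with $\Hom_R(K(A),K(A))$, together with the localisation facts underlying both (b) and (d): one must justify carefully that $K_R(A)_{\mathfrak p} \simeq K_{R_{\mathfrak p}}(A_{\mathfrak p})$ for every prime $\mathfrak p \supseteq I_d$ and that $(R/I_d)_{\mathfrak p}$ is Cohen--Macaulay for $\height \mathfrak p \le 1$, both of which rely on $R$, and hence $R/I_d$, being equidimensional and catenary. Everything else --- parts (a) and (c), and the reductions in (b) --- is routine.
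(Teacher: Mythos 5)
Your proof is correct. The paper itself gives essentially no argument for Proposition \ref{2.4} --- it simply refers to \cite{HH}, \cite{pS2} and Proposition \ref{2.3} --- and your reconstruction (applying Proposition \ref{2.3} with $M=A$ after observing $K(A)\otimes_R A\simeq K(A)$, proving commutativity by embedding $\Hom(K(A),K(A))$ into the product of its localizations at the minimal primes of $V(I_d)$, where it becomes $A_{\mathfrak p}\simeq\operatorname{End}(E_{A_{\mathfrak p}})$, and identifying the $S_2$-fication via finiteness, the $S_2$ property, and the isomorphism in codimension $\leq 1$) is precisely the standard chain of arguments supplied by those references.
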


For the proof we refer to \cite{HH}, \cite{pS2} and the previous
Proposition \ref{2.3}. Next we recall a definition introduced by
Hochster and Huneke (cf. \cite[(3.4)]{HH}).

\begin{definition} \label{2.5} Let $R$ denote an
commutative Noetherian ring with finite dimension. We denote by
$\mathbb G_R$ the undirected graph whose vertices are primes
$\mathfrak p$ of $R$ such that $\dim R = \dim R/\mathfrak p,$ and
two distinct vertices $\mathfrak p, \mathfrak q$ are joined by
an edge if and only if $(\mathfrak p, \mathfrak q)$ is an ideal of
height one.
\end{definition}

By view of the definition \ref{2.5} observe the following: For a
commutative ring $R$ with $\dim R < \infty$ the variety $V(0_d)$
is connected in codimension one if and only if $\mathbb G_R$ is
connected. For the notion of connectedness in codimension one we
refer to Hartshorne's paper \cite[Proposition 1.1 and
Definition]{rH}.

As an application here we describe when the
endomorphism ring $K(K(A)) \simeq \Hom(K(A), K(A))$ of the canonical module
$K(A)$ is a local ring, see \cite[3.6]{HH}.

\begin{lemma} \label{2.6} With the notion of Proposition
\ref{2.4} and assuming that $R$ is complete the following
conditions are equivalent:

\begin{itemize}

\item[(i)] $K(A)$ is indecomposable.

\item[(ii)] $V(0_d)$ is connected in codimension one.

\item[(iii)] $K(K(A))$ is a local ring.
\end{itemize}
\end{lemma}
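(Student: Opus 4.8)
The plan is to prove the three equivalences by exploiting the self-duality of the canonical module $K(A)$ together with the structure of its endomorphism ring $K(K(A)) \simeq \Hom(K(A), K(A))$ established in Proposition \ref{2.4}. The key observation is that, since $R$ is complete, $A/0_d$ is a complete local ring whose $S_2$-fication is precisely $B := \Hom(K(A), K(A))$ by Proposition \ref{2.4}(d); moreover $K(A) = K(A/0_d)$ depends only on the unmixed quotient $A/0_d$, so we may replace $A$ by $A/0_d$ and assume $A$ is unmixed of dimension $d$ with $V(0_d) = \Spec A$. Under this reduction the ring $\mathbb G_R$ of Definition \ref{2.5} is the connectedness graph of $A$ itself (its vertices are the minimal primes of $A$, which all have coheight $d$), and "$V(0_d)$ connected in codimension one" means exactly "$\mathbb{G}_{A}$ is connected."

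First I would treat the implication (ii) $\Rightarrow$ (iii). Assuming $\mathbb{G}_{A}$ is connected, I want to show $B$ is local. Since $B$ is the $S_2$-fication of the complete local ring $A$, it is a module-finite $A$-algebra, hence semilocal; its maximal ideals lie over $\mathfrak m_A$. The point is that the idempotents of $B$ correspond to a partition of $\Spec A$ (equivalently of the minimal primes of $A$) into clopen pieces of the normalized/$S_2$-fied spectrum, and connectedness in codimension one of $A$ forces $\Spec B$ to be connected: a non-trivial idempotent $e \in B$ would split the minimal primes of $A$ into two non-empty sets no two of which can be joined by a height-one prime, contradicting connectedness of $\mathbb{G}_{A}$. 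This is essentially Hochster--Huneke \cite[3.6]{HH}, and I would cite it while recording the graph-theoretic reformulation above. A connected semilocal ring that is module-finite over a complete local ring and satisfies $S_2$ is in fact local, because $S_2$ rules out embedded components and the spectrum being connected together with the finite fibre over $\mathfrak{m}_A$ collapses the maximal ideals to one; this gives (iii).

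Next, (iii) $\Rightarrow$ (i): if $K(A)$ were decomposable, $K(A) \simeq M_1 \oplus M_2$ with both summands non-zero, then $\Hom(K(A),K(A))$ would contain the two non-trivial orthogonal idempotent projections, so it could not be local; contrapositively, $B$ local forces $K(A)$ indecomposable. Finally, (i) $\Rightarrow$ (ii): here I would argue that if $\mathbb{G}_{A}$ is disconnected, say $\Spec A = Z_1 \sqcup Z_2$ with no height-one prime meeting both and both parts non-empty and of dimension $d$, then $K(A)$ decomposes accordingly. The mechanism is that $K(A)$ satisfies $S_2$, so it has no associated primes of dimension $< d$; its associated primes are exactly the coheight-$d$ minimal primes of $A$, and the $S_2$ property lets one glue sections over the punctured spectrum. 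A disconnection of $\Spec A$ in codimension one yields a disconnection of $\Spec B$ (since $B$ is $S_2$ and module-finite over $A$, its minimal primes match those of $A$ and no new codimension-one obstruction appears), hence a non-trivial idempotent in $B = \Hom(K(A),K(A))$, which in turn splits $K(A)$ as a direct sum. I expect the main obstacle to be this last implication: making precise that connectedness in codimension one is the exact obstruction to idempotents in the $S_2$-fication, i.e. that $\Spec B$ is connected if and only if $\mathbb{G}_{A}$ is connected. This is where I would lean most heavily on \cite[3.6]{HH} and on the characterization of the $S_2$-fication via the glueing of the structure sheaf over loci of codimension $\geq 2$; once that correspondence between idempotents of $B$ and disconnections of $\mathbb{G}_{A}$ is in hand, all three equivalences follow formally.
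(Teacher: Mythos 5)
Your proposal is correct and follows essentially the same route as the paper, which does not give its own argument but simply refers to Hochster--Huneke \cite[3.6]{HH}; your outline is in substance a faithful reconstruction of that result (reduction to the unmixed case, idempotents of the $S_2$-fication versus disconnections of the graph, and the formal equivalence between indecomposability of $K(A)$ and locality of its endomorphism ring). The only cosmetic quibble is that in the step ``connected $\Rightarrow$ local'' the decisive fact is that a module-finite algebra over a complete local ring is a finite product of local rings, so locality is exactly the absence of nontrivial idempotents; the appeal to $S_2$ there is not needed.
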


\section{On a formal ring extension}
Let $(R, \mathfrak m)$ denote an $n$-dimensional Gorenstein ring. Then it is well-known
(cf \cite{rH}) that the minimal injective resolution $R \qism E^{\cdot}$ is
a dualizing complex. In particular, this provides a natural homomorphism
\[
M \to \Ext_R^c(\Ext_R^c(M, R), R), \quad c = n - \dim M,
\]
for a finitely generated $R$-module M. Let $I \subset R$ denote
an ideal of $R$ and put $A = R/I.$ So there is a natural homomorphism
\[
A \to \Ext^c_R(\Ext^c_R(A,R), R) \simeq \Hom_R(K(A), K(A)), \quad c = n - d.
\]
It is well-known (cf. e.g. \cite{pS2}) that the kernel coincides with
the ideal $I_d,$ that is the intersection of all primary components
of $I$ whose dimension is $d = \dim R/I.$ In other words, $I_d = I R_S \cap R,$ where
$S = \cap_{\mathfrak p \in \Assh R/I} R \setminus \mathfrak p$ and $\Assh M =
\{\mathfrak p \in \Ass M | \dim R/\mathfrak p = \dim M\}$ for an $R$-module $M.$
Moreover, $\Hom_R(K(A), K(A))$ is a commutative ring, the $S_2$-fication of $A$ and
the natural homomorphism above is a homomorphism of commutative rings.

Now let $R/I^{\alpha +1} \to R/I^{\alpha}, \alpha \in \mathbb N,$ denote the
natural homomorphism. Then there is a commutative diagram
\[
\begin{array}{ccc}
  R/I^{\alpha +1} & \to & \Ext_R^c(\Ext_R^c(R/I^{\alpha +1},R),R) \\
  \downarrow &  & \downarrow \\
  R/I^{\alpha} & \to & \Ext_R^c(\Ext_R^c(R/I^{\alpha},R),R).
\end{array}
\]
Clearly, for each $\alpha \in \mathbb N$ the vertical homomorphisms are homomorphisms
of commutative rings.

\begin{theorem} \label{3.1} With the previous notation there is a homomorphism
\[
\phi : {\hat R}^I \to  \varprojlim \Ext_R^c(\Ext_R^c(R/I^{\alpha},R),R) =: B,
\]
where ${\hat R}^I$ denotes the $I$-adic completion of $R.$ Moreover, there are
the following properties:
\begin{itemize}
\item[(a)] $B$ admits the structure of a commutative ring so that $\phi$
is a homomorphism of rings.
\item[(b)] $\ker \phi = \cap_{\alpha \geq \mathbb N} (I^{\alpha})_d = 0_S,$
where $S = \cap_{\mathfrak p \in \Assh R/I} R \setminus \mathfrak p.$
\item[(c)] The composition of the natural homomorphism $R \to {\hat R}^I$ with $\phi: {\hat R}^I \to B$
induces a non-zero homomorphism $k \otimes R \to k \otimes B.$ In particular $B \not= 0.$
\end{itemize}
\end{theorem}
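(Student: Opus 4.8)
The plan is to build the map $\phi$ levelwise from the natural ring homomorphisms
$R/I^\alpha \to \Ext_R^c(\Ext_R^c(R/I^\alpha,R),R)$ introduced above, pass to the inverse limit, and then read off the three properties.

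First I would construct $\phi$. For each $\alpha$ write $B_\alpha := \Ext_R^c(\Ext_R^c(R/I^\alpha,R),R) \simeq \Hom_R(K(R/I^\alpha),K(R/I^\alpha))$, which by Proposition \ref{2.4} is a commutative ring and (since it is an inverse limit of the $S_2$-ifications, but more simply as a submodule of a product of commutative rings) the system $\{B_\alpha\}$ is an inverse system of commutative rings via the vertical maps of the displayed commutative diagram. Hence $B := \varprojlim B_\alpha$ carries a commutative ring structure with identity $(1)_\alpha$, giving (a). The natural maps $R/I^\alpha \to B_\alpha$ are compatible with the transition maps by that same commutative diagram, so they induce a ring homomorphism $\varprojlim R/I^\alpha = {\hat R}^I \to \varprojlim B_\alpha = B$; this is $\phi$.

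For (b): the kernel of the composite $R/I^\alpha \to B_\alpha$ is, as recalled just before the theorem, the ideal $(I^\alpha)_d/I^\alpha$, i.e. the intersection of the top-dimensional primary components. Taking inverse limits is left exact, so $\ker\phi$ is the inverse limit of the system $\{(I^\alpha)_d/I^\alpha\}$, which sits inside $\varprojlim R/I^\alpha = {\hat R}^I$; tracing through the description of ${\hat R}^I$ as compatible sequences, one identifies $\ker\phi$ with the set of $(b_\alpha) \in {\hat R}^I$ with each $b_\alpha$ lying in the $S$-saturation $0_S$ localized appropriately, i.e. $\ker\phi = \cap_{\alpha}(I^\alpha)_d$, and this intersection equals $0_S$ with $S = \cap_{\mathfrak p \in \Assh R/I} R\setminus\mathfrak p$ since a prime $\mathfrak p$ is minimal over some $(I^\alpha)_d$ precisely when $\mathfrak p \in \Assh R/I$ and the $(I^\alpha)_d$ form a cofinal system cutting out exactly those components. (Here one uses that $\Assh R/I^\alpha$ stabilizes to $\Assh R/I$, a standard fact about powers of ideals.)

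For (c): after tensoring with $k$, the composite $R \to {\hat R}^I \xrightarrow{\phi} B$ followed by projection to $B_1 = \Hom_R(K(R/I),K(R/I))$ is the canonical map $R \to \Hom_R(K(R/I),K(R/I))$ reduced mod $\mathfrak m$; by Proposition \ref{2.4}(c) there is an injection $R/I_d \hookrightarrow \Hom_R(K(R/I),K(R/I))$ which sends $1$ to $1$, so $k\otimes_R R \to k \otimes_R B_1$ is non-zero (it hits the class of $1$), hence $k\otimes_R R \to k\otimes_R B$ is non-zero and in particular $B\neq 0$.

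The main obstacle I expect is the precise bookkeeping in (b): verifying that $\varprojlim (I^\alpha)_d/I^\alpha$, viewed inside ${\hat R}^I$, is exactly $0_S$ requires knowing that the $(I^\alpha)_d$ are cofinal among ideals whose primary decomposition retains only the $\Assh R/I$ components — equivalently that $\cap_\alpha (I^\alpha)_d = \ker(R \to R_S)$ — and this rests on the stabilization of $\Assh R/I^\alpha$ together with Krull's intersection theorem applied component-wise; the rest of the argument is formal.
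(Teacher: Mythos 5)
Your argument is essentially the paper's own proof: build $\phi$ from the levelwise maps $R/I^{\alpha} \to \Hom_R(K(R/I^{\alpha}),K(R/I^{\alpha}))$, pass to the inverse limit for (a), identify $\ker\phi$ termwise and apply the Krull intersection theorem in $R_S$ for (b), and use that $\phi$ is a ring homomorphism for (c). The only points worth flagging are that $\Assh R/I^{\alpha}$ does not merely \emph{stabilize} to $\Assh R/I$ --- it equals $\Assh R/I$ for every $\alpha$, since $I$ and $I^{\alpha}$ have the same minimal primes --- and that your explicit projection to $B_1$ in (c), combined with Nakayama for the finitely generated module $B_1$ to get $1 \notin \mathfrak m B_1$, is a useful tightening of the paper's terser assertion that the class of $1$ does not die modulo $\mathfrak m$.
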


\begin{proof} The inverse system above defined by $R/I^{\alpha}  \to \Ext_R^c(\Ext_R^c(R/I^{\alpha},R),R)$ provides -- by passing to the inverse
limit -- the homomorphism $\phi.$ As discussed above
\[
 \Ext_R^c(\Ext_R^c(R/I^{\alpha},R),R) \simeq \Hom_R(K(R/I^{\alpha}), K(R/I^{\alpha}))
 \]
admits the structure of a commutative ring such that the corresponding homomorphisms
in the inverse system are homomorphism of rings (cf. \cite{HH}). Then it is easy to check that
\[
\varprojlim \Hom_R(K(R/I^{\alpha}), K(R/I^{\alpha}))
\]
admits the structure of a commutative ring such that $\phi$ is a homomorphism of rings.
This proves (a).

For the proof of (b) recall that the kernel of $R/I^{\alpha} \to \Hom_R(K(R/I^{\alpha}), K(R/I^{\alpha}))$ is equal to $(I^{\alpha})_d.$ Then $\ker \phi = \cap_{\alpha \geq \mathbb N} (I^{\alpha})_d,$ as follows by elementary properties of the inverse limit.
Moreover $\ker \phi = 0_S$ by the Krull Intersection Theorem and the discussion above.

For the proof of (c) notice that the homomorphisms $R \to {\hat R}^I$ as well as $\phi : {\hat R}^I \to B$ are ring homomorphisms. That is, they respect the identity. Therefore, the
residue class $1 + \mathfrak m$ does not map to zero.
\end{proof}

Now let us relate the structure of the ring $B = \varprojlim
\Ext_R^c(\Ext_R^c(R/I^{\alpha},R),R)$ to the local cohomology of $R$ with
respect to $I.$ Surprisingly the Matlis dual of $H^d_{\mathfrak
m}(H^c_I(R))$ admits the structure of a commutative ring.

\begin{theorem} \label{3.2} Let $(R, \mathfrak m)$ denote a
Gorenstein ring. Let $I \subset R$ be an ideal with $d = \dim R/I$ and $c = \height I.$
\begin{itemize}
\item[(a)] There are natural isomorphisms
\[
B \simeq \Ext_R^c(H^c_I(R), R) \simeq \Hom_R(H^c_I(R), H^c_I(R)).
\]
\item[(b)] If $R$ is in addition complete, then
\[
\varprojlim \Ext_R^c(\Ext_R^c(R/I^{\alpha},R),R) \simeq \Hom_R(H^d_{\mathfrak m}(H^c_I(R)), E).
\]
\item[(c)] If $R$ is complete, then $B$ is also $\mathfrak m$-adically complete.
\item[(d)] If $V(I_d)$ is connected in codimension one, then $(B, \mathfrak n)$ is a quasi-local ring.
\end{itemize}
\end{theorem}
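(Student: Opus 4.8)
The strategy is to read off all four parts from two facts already available. First, by Theorem \ref{3.1}, $B=\varprojlim_\alpha K(K(R/I^\alpha))$, where $K(K(R/I^\alpha))=\Ext^c_R(\Ext^c_R(R/I^\alpha,R),R)$, together with its (commutative) ring structure. Second, $H^c_I(R)=\varinjlim_\alpha\Ext^c_R(R/I^\alpha,R)=\varinjlim_\alpha K(R/I^\alpha)$: this holds because $c=\height I=\grade I$ (as $R$ is Cohen--Macaulay), so that $\Ext^j_R(R/I^\alpha,R)=0$ for $j<c$, and because $\dim R/I^\alpha=d$ forces $\Ext^c_R(R/I^\alpha,R)=K^d(R/I^\alpha)=K(R/I^\alpha)$.

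For part (a), regard $H^c_I(R)=\varinjlim_\alpha K(R/I^\alpha)$ as a countable filtered colimit and apply $\Ext^\bullet_R(-,R)$ to the telescope sequence $0\to\bigoplus_\alpha K(R/I^\alpha)\to\bigoplus_\alpha K(R/I^\alpha)\to H^c_I(R)\to0$; this produces the Milnor exact sequence
\[
0\to{\varprojlim}^{1}_{\alpha}\Ext^{c-1}_R(K(R/I^\alpha),R)\to\Ext^c_R(H^c_I(R),R)\to\varprojlim_\alpha\Ext^c_R(K(R/I^\alpha),R)\to0 .
\]
By the remark before the theorem $\Ann_R K(R/I^\alpha)=(I^\alpha)_d$, which has height $c$ because $R$ is catenary and equidimensional; since $R$ is Cohen--Macaulay this gives $\grade((I^\alpha)_d,R)=c$, whence $\Ext^{c-1}_R(K(R/I^\alpha),R)=0$ and the sequence collapses to $\Ext^c_R(H^c_I(R),R)\cong\varprojlim_\alpha K(K(R/I^\alpha))=B$, compatibly with the transition maps of Theorem \ref{3.1}. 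For the remaining isomorphism, $H^c_I(R)$ is $I$-torsion, so applying $\RHom_R(H^c_I(R),-)$ to the triangle ${\rm R}\gam(R)\to R\to Z\to$ yields $\RHom_R(H^c_I(R),R)\simeq\RHom_R(H^c_I(R),{\rm R}\gam(R))$, because ${\rm R}\gam(Z)\simeq0$ and hence $\RHom_R(H^c_I(R),Z)\simeq\RHom_R({\rm R}\gam(H^c_I(R)),{\rm R}\gam(Z))\simeq0$. The complex ${\rm R}\gam(R)$ has cohomology in degrees $\ge c$, with $H^c=H^c_I(R)$, and its truncation triangle $H^c_I(R)[-c]\to{\rm R}\gam(R)\to C^\cdot\to$ has $C^\cdot$ with cohomology in degrees $\ge c+1$, so that $H^j(\RHom_R(H^c_I(R),C^\cdot))=0$ for $j\le c$ by a spectral sequence estimate. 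Reading the long exact sequence in degree $c$ therefore gives
\[
\Hom_R(H^c_I(R),H^c_I(R))\cong H^c\!\bigl(\RHom_R(H^c_I(R),{\rm R}\gam(R))\bigr)\cong\Ext^c_R(H^c_I(R),R),
\]
the identity underlying the truncation complex of Section 5 (cf.\ also \cite{HeS}); transporting the ring structure of $B$ along these isomorphisms shows that $\Hom_R(H^c_I(R),H^c_I(R))$ is commutative.

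For part (b), with $R$ complete, Local Duality and the Matlis reflexivity of the finitely generated module $K^d(N)$ give, for every finitely generated $N$ with $\dim N=d$, an isomorphism $\Ext^c_R(N,R)=K^d(N)\cong\Hom_R(H^d_{\mathfrak m}(N),E)$ natural in $N$. Taking $N=K(R/I^\alpha)$ and passing to the limit,
\[
B=\varprojlim_\alpha K(K(R/I^\alpha))\cong\varprojlim_\alpha\Hom_R(H^d_{\mathfrak m}(K(R/I^\alpha)),E)\cong\Hom_R\!\bigl(\varinjlim_\alpha H^d_{\mathfrak m}(K(R/I^\alpha)),E\bigr)\cong\Hom_R(H^d_{\mathfrak m}(H^c_I(R)),E),
\]
using that $\Hom_R(-,E)$ converts colimits into limits, that $H^d_{\mathfrak m}$ commutes with direct limits, and that $\varinjlim_\alpha K(R/I^\alpha)=H^c_I(R)$. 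Part (c) is then immediate: $H^d_{\mathfrak m}(H^c_I(R))$ is $\mathfrak m$-torsion, hence the filtered union of submodules annihilated by powers of $\mathfrak m$, so by (b) $B$ is an inverse limit of modules annihilated by powers of $\mathfrak m$ with surjective transition maps, and any such inverse limit is $\mathfrak m$-adically complete; equivalently, the Matlis dual of an $\mathfrak m$-torsion module over a complete local ring is always $\mathfrak m$-adically complete.

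For part (d) I would apply Lemma \ref{2.2} to the inverse system of rings $B_\alpha:=K(K(R/I^\alpha))=\Hom_R(K(R/I^\alpha),K(R/I^\alpha))$, each of which is module-finite over $R$ and hence semi-local. Since $\sqrt{(I^\alpha)_d}=\sqrt{I_d}$ one has $V((I^\alpha)_d)=V(I_d)$, which is connected in codimension one; by Lemma \ref{2.6} applied to $A=R/I^\alpha$ (this is the point at which completeness of $R$ is used --- otherwise one replaces $R$ by $\hat R$ first) each $B_\alpha$ is therefore local, say with maximal ideal $\mathfrak n_\alpha$. The transition map $B_{\alpha+1}\to B_\alpha$ is a homomorphism of $R$-algebras whose residue field is finite over $k$, so the contraction of $\mathfrak n_\alpha$ is a maximal ideal of $B_{\alpha+1}$, i.e.\ equals $\mathfrak n_{\alpha+1}$; the transition maps are thus local, and Lemma \ref{2.2} shows $B=\varprojlim_\alpha B_\alpha$ is quasi-local. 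The main obstacle is the second isomorphism of (a): it hinges on the derived-category identity $\RHom_R(H^c_I(R),R)\simeq\RHom_R(H^c_I(R),{\rm R}\gam(R))$ and on the degree analysis of the truncation complex; moreover one must check throughout that the duality isomorphisms are natural in $\alpha$, so that the interchanges of $\varprojlim$ with $\varinjlim$ and the passage of ring structures through the isomorphisms are legitimate.
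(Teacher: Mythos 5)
Your proof is correct and follows essentially the same route as the paper's: the truncation/injective-resolution comparison for $\Hom_R(H^c_I(R),H^c_I(R))\simeq\Ext^c_R(H^c_I(R),R)$ (your derived-category triangle is the paper's degreewise observation that $\Hom_R(H^c_I(R),E_R(R/\mathfrak p))=0$ for $\mathfrak p\notin V(I)$), local duality plus $\Hom_R(-,E)$ turning colimits into limits for (b), the Matlis dual of an $\mathfrak m$-torsion module being complete for (c), and Lemmas \ref{2.2} and \ref{2.6} for (d). The only substantive difference is that in the first isomorphism of (a) you control the $\varprojlim^1$ obstruction via the Milnor sequence and the grade bound $\Ext^{c-1}_R(K(R/I^\alpha),R)=0$, a point the paper passes over by simply asserting that $\Ext^c_R(-,R)$ converts the direct system into an inverse system.
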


\begin{proof}  By definition $H^c_I(R) \simeq \varinjlim \Ext_R^c(R/I^{\alpha}, R),$ so that
$B \simeq \Ext_R^c(H^c_I(R), R).$ To this end recall that $\Ext_R^{\cdot}(\cdot,R)$ transforms a direct system into an inverse system.
Let $R \qism E^{\cdot}$ denote a minimal injective resolution. Then there is an exact sequence $0 \to H^c_I(R) \to \gam(E^{\cdot})^c \to \gam(E^{\cdot})^{c+1}.$ It induces a natural commutative diagram with exact rows
\[
  \begin{array}{cccccc}
    0 \to & \Hom_R(H^c_I(R), H^c_I(R)) & \to & \Hom_R(H^c_I(R),\gam(E^{\cdot}))^c  & \to  & \Hom_R(H^c_I(R),\gam(E^{\cdot}))^{c+1} \\
      &     \downarrow             &      & \downarrow                          &      & \downarrow \\
    0 \to & \Ext_R^c(H^c_I(R), R)        & \to  & \Hom_R(H^c_I(R), E^{\cdot})^c       & \to  & \Hom_R(H^c_I(R), E^{\cdot})^{c+1} \\
  \end{array}
\]
because $\gam(E^{\cdot})$ is a subcomplex of $E^{\cdot}.$ The two last vertical homomorphisms
are isomorphisms. This follows because $\Hom_R(X, E_R(R/\mathfrak p)) = 0$ for an $R$-module $X$ with $\Supp_R X \subset V(I)$ and $\mathfrak p \not\in V(I).$ Therefore the first vertical map is also an isomorphism.

For the proof of (b) recall that the local cohomology commutes with direct limits. So, by the definition of $H^c_I(R)$ there is an isomorphism
\[
\Hom(H^d_{\mathfrak m}(H^c_I(R)), E) \simeq \varprojlim
\Hom(H^d_{\mathfrak m}(\Ext_R^c(R/I^{\alpha}, R)),E)
\]
and the Local Duality Theorem provides the statement in (b).

By virtue of (a) there is the natural isomorphism
\[
\varprojlim B/\mathfrak m^{\alpha}B \simeq \varprojlim (R/\mathfrak m^{\alpha}
\otimes \Hom(H^d_{\mathfrak m}(H^c_I(R)), E)).
\]
Since $E$ is an injective $R$-module there are the following
natural isomorphisms
\[
R/\mathfrak m^{\alpha} \otimes \Hom(H^d_{\mathfrak m}(H^c_I(R)), E)
\simeq \Hom(\Hom(R/\mathfrak m^{\alpha}, H^d_{\mathfrak m}(H^c_I(R))), E)
\]
for all $\alpha \in \mathbb N.$ As a consequence there is the
isomorphism
\[
\varprojlim B/\mathfrak m^{\alpha}B \simeq \Hom(H^0_{\mathfrak
m}(H^d_{\mathfrak m}(H^c_I(R))), E).
\]
But $H^d_{\mathfrak m}(H^c_I(R))$ is a module whose support is
contained in $V(\mathfrak m)$ so that
\[
H^0_{\mathfrak
m}(H^d_{\mathfrak m}(H^c_I(R))) \simeq H^d_{\mathfrak
m}(H^c_I(R)).
\]
But this means nothing else but $\varprojlim
B/\mathfrak m^{\alpha}B \simeq B.$ So, (c) is true.

For the proof of (d) let $\alpha \in \mathbb N$ be an integer. Define
\[
B_{\alpha} = \Ext_R^c(\Ext_R^c(R/I^{\alpha},R),R) =
\Hom_R(K(R/I^{\alpha}), K(R/I^{\alpha}))
\]
the endomorphism ring of the canonical module of
$R/I^{\alpha}.$ Since $
V(I) = V(I^{\alpha}), \alpha \in \mathbb N,$
is connected in codimension one we know (cf. Lemma \ref{2.6}) that
$(B_{\alpha}, \mathfrak n_{\alpha}), \alpha \in \mathbb N,$ is a local
ring. By view of Lemma \ref{2.2} it
follows that $(B, \mathfrak n) \simeq \varprojlim (B_{\alpha}, {\mathfrak n}_{\alpha})$ 
is a quasi-local ring. To this end note $(B_{\alpha +1}, \mathfrak n_{\alpha +1}) \to 
(B_{\alpha}, n_{\alpha})$ is a local homomorphism. This follows by contracting the 
maximal ideal $\mathfrak n_{\alpha}$ along the commutative diagram before 
Theorem \ref{3.1}. On the left it provides an injection $ k = k \hookrightarrow 
B_{\alpha}/{\mathfrak n_{\alpha}}.$ Therefore, on the right it yields 
$k \hookrightarrow B_{\alpha +1}/{\mathfrak n_{\alpha}} \cap B_{\alpha +1}.$ Because 
$B_{\alpha +1}$ is finitely generated over $R/I^{\alpha + 1}$ it follows  
that ${\mathfrak n}_{\alpha} \cap B_{\alpha + 1} = {\mathfrak n_{\alpha + 1}}.$ 
\end{proof}

\begin{problem} \label{3.3}
It is a natural question to ask whether the commutative ring $B$
constructed in Theorem \ref{3.1} is a Noetherian ring. We do not
know an answer in general. The stronger question whether $B$ is a
finitely generated $R$-module is not true.

To this end observe the following: Suppose that $R$ is complete.
Because of Theorem \ref{3.1} $B$ is a finitely generated $R$-module
if and only if $\dim_k B/\mathfrak mB < \infty$ (cf. e.g. \cite[Theorem
8.4]{hM}).
\end{problem}

\begin{lemma} \label{3.4} Fix the notation of Theorem \ref{3.2}. Assume in
addition that $R$ is complete. Then the following conditions are equivalent:
\begin{itemize}
\item[(i)] $\Ext^c_{\hat R}(H^c_{I \hat R}(\hat R),\hat R)$ is a finitely generated $\hat R$-module.

\item[(ii)] $\dim_k \Hom(k, H^d_{\mathfrak m}(H^c_I(R))) <
\infty.$

\item[(iii)] $H^d_{\mathfrak m}(H^c_I(R))$ is an Artinian
$R$-module.
\end{itemize}
\end{lemma}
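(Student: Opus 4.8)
The plan is to reduce the whole assertion, via Matlis duality, to a statement about the $\mathfrak m$-torsion module $N := H^d_{\mathfrak m}(H^c_I(R))$. Since $R$ is assumed complete we have $\hat R = R$, so condition (i) merely says that $B = \Ext_R^c(H^c_I(R), R)$ is a finitely generated $R$-module, and by Theorem \ref{3.2}(a),(b) there is an isomorphism $B \simeq \Hom_R(N, E)$. Recall from the proof of Theorem \ref{3.2}(c) that $N$ is $\mathfrak m$-torsion, i.e.\ $H^0_{\mathfrak m}(N) = N$; in particular $\Ass_R N \subseteq \{\mathfrak m\}$. For the equivalence of (i) and (ii) I would use that, $E$ being injective, the functor $\Hom_R(-, E)$ converts $k \otimes_R -$ into $\Hom_R(k, -)$; applied to $N$ and combined with $B \simeq \Hom_R(N,E)$ this gives a natural isomorphism
\[
B/\mathfrak m B \;\simeq\; \Hom_R(\Hom_R(k, N), E).
\]
Since $\Hom_R(k, N)$ is a $k$-vector space, its Matlis dual is $\Hom_k(\Hom_R(k,N), k)$, so $\dim_k B/\mathfrak m B = \dim_k \Hom_R(k, N)$. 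Now $B$ is $\mathfrak m$-adically complete by Theorem \ref{3.2}(c), hence by the observation recorded in \ref{3.3} (a complete module is finitely generated precisely when its top is a finite-dimensional $k$-vector space, cf.\ \cite[Theorem 8.4]{hM}) condition (i) is equivalent to $\dim_k B/\mathfrak m B < \infty$, that is, to (ii).

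Next I would establish the equivalence of (ii) and (iii), which does not even need completeness. If $N$ is Artinian, then $\Hom_R(k, N) = (0 :_N \mathfrak m)$ is a submodule of $N$ that is at the same time a $k$-vector space, hence of finite length, giving (ii). Conversely, since $N$ is $\mathfrak m$-torsion its injective hull has the shape $E_R(N) \simeq E^{\mu}$, where $\mu = \dim_k \Hom_R(k, N)$ is the $0$-th Bass number of $N$, concentrated at $\mathfrak m$. If (ii) holds then $\mu < \infty$, so $N$ embeds into the Artinian module $E^{\mu}$ and is therefore Artinian. This closes the cycle (i) $\Rightarrow$ (ii) $\Rightarrow$ (iii) $\Rightarrow$ (i).

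None of the individual steps is genuinely hard; the one place deserving a little care is the displayed isomorphism $B/\mathfrak m B \simeq \Hom_R(\Hom_R(k,N),E)$, i.e.\ the commutation of $k \otimes_R -$ with the Matlis functor applied to $N$, which rests on the injectivity of $E$ together with a finite presentation of $k$. An alternative route to (i) $\Rightarrow$ (iii) avoids this: if $B$ is finitely generated then $\Hom_R(B, E)$ is Artinian, and the biduality map $N \to \Hom_R(\Hom_R(N,E),E) = \Hom_R(B,E)$ is injective because $E$ is a cogenerator, so $N$ is a submodule of an Artinian module and hence Artinian; here, though, one must resist invoking reflexivity of $N$, which may fail for a general $\mathfrak m$-torsion module, and use only the injectivity of that natural map.
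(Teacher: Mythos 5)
Your argument is correct and follows exactly the route the paper intends: the paper's proof is the one-line remark ``clear by Matlis duality and the previous observation,'' meaning the identification $B \simeq \Hom_R(H^d_{\mathfrak m}(H^c_I(R)),E)$ from Theorem \ref{3.2}, the completeness of $B$ together with the criterion recorded in \ref{3.3}, and the standard fact that an $\mathfrak m$-torsion module is Artinian iff its socle is finite-dimensional. You have simply supplied the details (including the isomorphism $k\otimes_R\Hom_R(N,E)\simeq\Hom_R(\Hom_R(k,N),E)$ already used in the proof of Theorem \ref{3.2}(c)), so nothing further is needed.
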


\begin{proof} The proof is clear by view of Matlis duality and the
previous observation.
\end{proof}

In the following there is an example of an ideal $I \subset R$ in
a local complete Gorenstein ring $R$ such that $B$ is not a
finitely generated $R$-module.

\begin{example} \label{3.5} Let $k$ be a field and let
$A = k[|u,v,x,y|]$ be the formal power series ring in four
variables. Put $R = A/fA,$ where $f = xv-yu.$ Let $I = (x,y)R.$ We
will show that
\[
B = \varprojlim \Ext^1_R(\Ext^1_R(R/I^{\alpha}, R),R)
\]
is not a finitely generated $R$-module, while it is a Noetherian ring.

To this end put $A_{\alpha} = R/I^{\alpha} \simeq A/((x,y)^{\alpha}, f)$ and 
$B_{\alpha} = k[|u,v|][a]/(a^{\alpha}),$ where $a$ denotes a variable over $k[|u,v|].$ 
Consider the ring homomorphism $A \to B_{\alpha}$ induced by $x \mapsto ua, y \mapsto
va.$ As it is easily seen it induces an injection $A_{\alpha} \to B_{\alpha}, \alpha
\in \mathbb N.$ Clearly $B_{\alpha}, {\alpha} \in \mathbb N,$ is a
two-dimensional Cohen-Macaulay ring. The cokernel of this embedding is 
\[ 
k[|u,v|][a]/(k[|u,v|][ua,va] + a^{\alpha}k[|u,v|][a]),
\]
which is a finite dimensional $k$-vector space. Whence $\dim B_{\alpha}/A_{\alpha} = 0$ 
and $B_{\alpha}$ is the $S_2$-fication of $A_{\alpha},$ that is  
$B_{\alpha} \simeq \Ext^1_R(\Ext^1_R(R/I^{\alpha},R),R)$ 
(cf. Proposition \ref{2.4}).

Therefore there are short exact sequences
\[
0 \to A_{\alpha} \to B_{\alpha} \to H^1_{\mathfrak m}(A_{\alpha}) \to 0
\]
for all $\alpha \in \mathbb N.$ By passing to the inverse limit it
induces a short exact sequence
\[
0 \to R \to B \to \varprojlim H^1_{\mathfrak m}(R/I^{\alpha}) \to 0.
\]
Moreover it provides that $B \simeq k[|u,v,a|],$ which is clearly
a Noetherian ring. Moreover $B$ is not a finitely generated
$R$-module as easily seen.

Furthermore, by the local duality theorem there is the isomorphism
\[
\varprojlim H^1_{\mathfrak m}(R/I^{\alpha}) \simeq \Hom_R(H^2_I(R), E).
\]
Therefore, $\Hom_R(H^2_I(R),E)$ is not a finitely generated
$R$-module. By Matlis duality it follows that $H^2_I(R)$ is not an
Artinian $R$-module and therefore the socle $\Hom_R(k, H^2_I(R))$
is not of finite dimension. Originally this was shown by
Hartshorne (cf. \cite[Section 3]{rH2}). In fact, the analysis of
Hartshorne's example inspired the above construction.  
\end{example}

The proof of Theorem \ref{1.1} (a) follows by Theorem \ref{3.1} (a) and Theorem \ref{3.2} (b).  In the following sections we shall discuss some particular cases
in which $B$ is a finitely generated $R$-module.

\section{The case of regular local rings}
In this section let $(R,\mathfrak m)$ denote a regular local ring.
Let $I \subset R$ be an ideal of $R.$ Huneke and Sharp (cf.
\cite{HS}) in the case of prime characteristic $p > 0$ resp.
Lyubeznik (cf. \cite{gL1}) in the case of characteristic zero
proved the following result:

\begin{remark} \label{4.1} Let $(R,\mathfrak m)$ be a regular
local ring containing a field. Let $I \subset R$ be an ideal. For
all $i,j \in \mathbb Z$ the following (cf. \cite{HS} and \cite{gL1})
was shown:
\begin{itemize}
\item[(a)] $H^j_{\mathfrak m}(H^i_I(R))$ is an injective
$R$-module,

\item[(b)] $\injdim_R H^i_I(R) \leq \dim_R H^i_I(R) \leq \dim R
-i.$

\item[(c)] $\Ext^j_R(k, H^i_I(R)) \simeq \Hom_R(k, H^j_{\mathfrak m}(H^i_I(R))$ and $\dim_k \Ext_R^j(k, H^i_I(R)) < \infty.$

\end{itemize}
\end{remark}

As we shall see the previous result applies in an essential way in
order to describe properties of the ring
\[
B = \varprojlim \Ext_R^c(\Ext_R^c(R/I^i,R),R),\; c = \dim R - \dim
R/I,
\]
introduced in Section 4. But before let us recall the Bass numbers
in Remark \ref{4.1} (c) were introduced by Lyubeznik (cf.
\cite[4.1]{gL1}). In fact Lyubeznik has shown that they only depend
upon $R/I.$ With the above results in mind we shall describe the structure of
the ring $B$ in case $(R,\mathfrak m)$ is a complete regular local
ring containing a field.

\begin{lemma} \label{4.2} With the notation as above assume that
$(R,\mathfrak m)$ is a complete regular local ring containing a
field. There is an isomorphism
\[
\varprojlim \Ext_R^c(\Ext_R^c(R/I^i,R),R) \simeq R^l,
\]
where $l = \dim \Ext_R^d(k, H^c_I(R)), d = \dim R/I, c = \dim R
-\dim R/I.$
\end{lemma}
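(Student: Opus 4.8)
The plan is to pass from the inverse limit of endomorphism rings over all powers of $I$ to a concrete description using the local cohomology module $H^c_I(R)$ and Matlis duality, and then to exploit the injectivity of $H^d_{\mathfrak m}(H^c_I(R))$ guaranteed by Remark \ref{4.1}. First I would invoke Theorem \ref{3.2}(b): since $R$ is complete,
\[
B = \varprojlim \Ext_R^c(\Ext_R^c(R/I^i,R),R) \simeq \Hom_R(H^d_{\mathfrak m}(H^c_I(R)), E),
\]
where $E = E_R(k)$. So the task reduces to computing the Matlis dual of $M := H^d_{\mathfrak m}(H^c_I(R))$. By Remark \ref{4.1}(b), $\dim_R H^c_I(R) \le \dim R - c = d$, so all higher local cohomology $H^j_{\mathfrak m}(H^c_I(R))$ with $j > d$ vanishes and $M = H^d_{\mathfrak m}(H^c_I(R))$ is the top one.

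The key structural input is Remark \ref{4.1}(a): $M$ is an injective $R$-module. Since $R$ is complete local, $M$ is a direct sum of copies of $E_R(R/\mathfrak p)$ for various primes $\mathfrak p$; but $M$ is $\mathfrak m$-torsion (its support lies in $V(\mathfrak m)$, as used already in the proof of Theorem \ref{3.2}(c)), so in fact $M \simeq E^{\,(\Lambda)}$ for some index set $\Lambda$. By Remark \ref{4.1}(c), $\dim_k \Hom_R(k, M) = \dim_k \Ext_R^d(k, H^c_I(R)) = l < \infty$, and since $\Hom_R(k, E^{(\Lambda)}) \simeq k^{(\Lambda)}$, the set $\Lambda$ is finite of cardinality $l$. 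Hence $M \simeq E^l$, and therefore
\[
B \simeq \Hom_R(E^l, E) \simeq \Hom_R(E,E)^l \simeq R^l,
\]
using the standard fact that for a complete local ring $\Hom_R(E,E) \simeq R$ (Matlis duality), which applies since $R$ is complete.

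The only point requiring a little care — and the step I would flag as the main obstacle — is justifying that an arbitrary injective $\mathfrak m$-torsion module with finite-dimensional socle is a \emph{finite} direct sum of copies of $E$; this is where completeness of $R$ and the structure theory of injectives over a Noetherian ring enter, together with the observation that the number of summands equals $\dim_k \Hom_R(k, M)$. Everything else is a formal manipulation of Matlis duality. Once $M \simeq E^l$ is established, the isomorphism $B \simeq R^l$ is immediate, and the displayed formula $l = \dim_k \Ext_R^d(k, H^c_I(R))$ follows directly from Remark \ref{4.1}(c).
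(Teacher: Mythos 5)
Your proof is correct, and it takes a slightly different route to the final step than the paper does. Both arguments start the same way: reduce via Theorem \ref{3.2}(b) to computing the Matlis dual of $M = H^d_{\mathfrak m}(H^c_I(R))$, and both feed in Remark \ref{4.1}(a) (injectivity of $M$) and Remark \ref{4.1}(c) (finiteness of the socle dimension $l$). Where you diverge is at the end: you invoke the Matlis structure theory of injective modules to write $M \simeq E^{(\Lambda)}$ (the $\mathfrak m$-torsion hypothesis forcing every indecomposable summand to be $E = E_R(k)$), conclude $|\Lambda| = l$ from the socle computation, and then dualize $E^l$ directly. The paper instead argues that $B = \Hom_R(M,E)$ is flat because $M$ is injective, that $B$ is finitely generated via Lemma \ref{3.4} (Artinian-ness of $M$), and then uses that a finitely generated flat module over a Noetherian local ring is free of rank $\dim_k B/\mathfrak m B = \dim_k\Hom_R(k,M) = l$. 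Your version is arguably more self-contained and explicit — it produces the isomorphism $M \simeq E^l$ rather than only its dual, and it bypasses Lemma \ref{3.4} and the ``flat plus finitely generated implies free'' step. One small imprecision: the decomposition of an injective $\mathfrak m$-torsion module with finite socle into finitely many copies of $E$ does not require completeness of $R$ (Matlis's structure theorem holds over any Noetherian ring); completeness is needed only for $\Hom_R(E,E)\simeq R$ at the very end. This does not affect the correctness of your argument.
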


\begin{proof} By virtue of Remark \ref{4.1} it turns out that $l$
is a finite number. As a consequence of
Lemma \ref{3.4} it follows that $B$ is a finitely generated
$R$-module. Moreover, by virtue of Remark \ref{4.1} (a) and the
definition of $B$ as the Matlis dual of $H^d_{\mathfrak
m}(H^c_I(R))$ (cf. Theorem \ref{3.2}) we see that $B$ is flat as an
$R$-module. Therefore $B$ is a free $R$-module and $B \simeq R^l,$
the direct sum of $l$ copies of $R.$
\end{proof}

In the following we will give an interpretation of the rank $l$ in
topological terms. To this end we use results of Lyubeznik (cf. \cite{gL2})
and Zhang (cf. \cite{wZ}).

\begin{remark} \label{4.3} Let $(R, \mathfrak m)$ be an $n$-dimensional regular local
ring containing a field. Let $I \subset R$ denote an ideal with $c = \height I$
and $d = \dim R/I.$ Let $A$  denote the completion of the strict
Henselization of the completion of $R/I.$ Let $t$ denote the
number of connected components of the graph $\mathbb G_A.$ Then
\[
\dim_k \Hom_R(k, H^d_{\mathfrak m}(H^c_I(R))) = \dim_k \Ext_R^d(k,
H^c_I(R)) = t
\]
(cf. \cite{gL2} and \cite{wZ}).
\end{remark}

It was pointed out by Lyubeznik (cf. \cite{gL2}) that the graph
$\mathbb G_{A},$ where $A$ the completion of the strict
Henselization of the completion of $R/I,$ is realized
by a smaller ring. Namely, let $k \subset \hat{A}$ denote a
coefficient field. It follows (cf. \cite[Theorem 4.2]{HL}) that there
exists a finite separable field extension $k \subset K$ such that
the graphs $\mathbb G_{K \otimes_k \widehat{R/I}}$ and $\mathbb G_A$ are
isomorphic.

Here we want to give another description of the invariant $t = \dim_k \Ext_R^d(k,
H^c_I(R)).$ As a first step in this direction there is the
following result:

\begin{theorem} \label{4.4} Let $(R, \mathfrak m)$ denote an $n$-dimensional
complete regular ring containing a field. Let $I$ be an ideal of
$R$ and $c = \height I.$ Then the following is equivalent:
\begin{itemize}
\item[(i)] $V(I_d)$ is connected in codimension one.
\item[(ii)]
$ B = \varprojlim \Ext_R^c(\Ext_R^c(R/I^{\alpha},R),R)$ is a local ring.
\end{itemize}
\end{theorem}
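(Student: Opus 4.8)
The plan is to reduce the equivalence to the already-established Lemma \ref{2.6}, applied to a single well-chosen quotient ring, by combining the structural results of Section 3 with the finiteness input from Section 4. First I would recall from Theorem \ref{3.2}(a) that $B \simeq \Hom_R(H^c_I(R), H^c_I(R))$ and from Theorem \ref{3.2}(b) that, since $R$ is complete, $B \simeq \Hom_R(H^d_{\mathfrak m}(H^c_I(R)), E)$. By Lemma \ref{4.2} we moreover know $B \simeq R^l$ as an $R$-module, so $B$ is a finitely generated $R$-module, hence a complete Noetherian semilocal ring. Thus $B$ is local if and only if it is indecomposable as a ring, equivalently if and only if the finitely generated $R$-module $B \simeq R^l$ admits no nontrivial decomposition into a product of rings.

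The key step is to identify $B$ with the endomorphism ring of a single canonical module. Since $\ker(\hat R^I \to B) = 0_S$ with $S = \cap_{\mathfrak p \in \Assh R/I} R \setminus \mathfrak p$ (Theorem \ref{3.1}(b)), and since $0_S$ corresponds to $I_d$ (the intersection of the top-dimensional primary components of $I$), the map $\phi$ factors through $R/I_d$. I would argue that the induced map $R/I_d \to B$ exhibits $B$ as the $S_2$-ification of $R/I_d$: indeed each finite stage $B_\alpha = \Hom_R(K(R/I^\alpha), K(R/I^\alpha))$ is the $S_2$-ification of $R/(I^\alpha)_d$ by Proposition \ref{2.4}(d), and these all have the same support $V(I) = V(I_d)$; passing to the inverse limit and using that $B$ is module-finite and satisfies $S_2$ (being $R^l$, hence a maximal Cohen-Macaulay $R$-module), one concludes $B \simeq \Hom_R(K(R/I_d), K(R/I_d)) = K(K(R/I_d))$. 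Here it is crucial that $B$ is Noetherian and module-finite over the complete ring $R$ — this is exactly what Lemma \ref{4.2} supplies and what fails in the general Gorenstein setting of Example \ref{3.5}.

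With the identification $B \simeq K(K(R/I_d))$ in hand, the equivalence is immediate from Lemma \ref{2.6} applied to the complete ring $R$ and the ideal $I_d$ (so that $A = R/I_d$ and $0_d = 0$ in $A$, i.e. $V(0_d) = V(I_d)$): condition (iii) of Lemma \ref{2.6}, namely that $K(K(R/I_d))$ is a local ring, is condition (ii) here, and it is equivalent to $V(I_d)$ being connected in codimension one, which is condition (i) here.

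I expect the main obstacle to be the second step: rigorously passing the $S_2$-ification description through the inverse limit, i.e. showing that $\varprojlim_\alpha \Hom_R(K(R/I^\alpha), K(R/I^\alpha)) \simeq \Hom_R(K(R/I_d), K(R/I_d))$ rather than merely landing in some larger or smaller ring. The cleanest route is probably to avoid the limit entirely: use $B \simeq \Hom_R(H^c_I(R), H^c_I(R))$ directly, note that $\Ann_R H^c_I(R)$ and the support of $H^c_I(R)$ pin down $I_d$ (as $H^c_I(R)$ depends only on $R/I$ up to radical at the relevant components), and show the natural map $R/I_d \to B$ is a finite birational extension with $B$ satisfying $S_2$ — by uniqueness of the $S_2$-ification among finite birational extensions, $B$ is forced to be $K(K(R/I_d))$. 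Once that normalization-style uniqueness argument is nailed down, the topological characterization follows formally from Lemma \ref{2.6}.
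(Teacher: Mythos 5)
Your central claim, that $B \simeq \Hom_R(K(R/I_d), K(R/I_d)) = K(K(R/I_d))$, is false, and this sinks the whole approach. The module $K(K(R/I_d))$ is a finitely generated $R/I_d$-module, killed by $I_d$ and supported on $V(I_d)$, whereas Lemma~\ref{4.2} shows $B \simeq R^l$ is a \emph{free} $R$-module of positive rank, supported on all of $\Spec R$; these can only coincide when $c = 0$. A concrete check: for $R = k[[x,y]]$ and $I = (x)$ one finds $B = \Ext_R^1(R_x/R, R) \simeq R$, while $K(K(R/(x))) \simeq R/(x)$. So the reduction to Lemma~\ref{2.6} does not apply --- that lemma characterizes locality of $K(K(A))$ for a \emph{fixed} finite $R$-module $A$, which is a module-finite $R/I_d$-algebra, not a module-finite $R$-algebra like $B$.

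The confusion enters earlier: you write that $\phi : \hat R^I \to B$ ``factors through $R/I_d$'' because $\ker\phi = 0_S$, but for a complete regular local (hence domain) $R$ with $I \ne 0$ one has $0_S = 0$, so $\phi$ is injective and $R$ embeds in $B$ as a subring. The inverse limit $\varprojlim B_\alpha$ of the $S_2$-ifications $B_\alpha$ of $R/(I^\alpha)_d$ jumps in dimension: each $B_\alpha$ has dimension $d < n$, while the limit has dimension $n$ (this is already visible in Example~\ref{3.5}, where $\dim B_\alpha = 2$ but $\dim B = 3$). The paper's actual argument is quite different: (i) $\Rightarrow$ (ii) combines Theorem~\ref{3.2}(d) (quasi-locality of $B$ when $V(I_d)$ is connected in codimension one) with Lemma~\ref{4.2} ($B$ is Noetherian, being $R^l$), and (ii) $\Rightarrow$ (i) uses the Mayer--Vietoris decomposition $H^c_I(R) \simeq \oplus_{i=1}^t H^c_{I_i}(R)$ indexed by the $t > 1$ connected components of the graph $\mathbb G_{R/I}$, which exhibits $B$ as a nontrivial product of local rings.
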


\begin{proof} Because of $B \simeq \Hom_R(H^d_{\mathfrak m}(H^c_I(R), E)$ we may 
replace $I$ by $I_d.$ The implication (i) $\Longrightarrow$ (ii) is a consequence of
the results shown in Theorem \ref{3.2} (d) and Lemma \ref{4.2}, where it follows that
$B$ is a Noetherian ring.

In order to prove (ii) $\Longrightarrow$ (i) suppose that $V(I)$ is not connected
in codimension one. Let $\mathbb G_1, \ldots, \mathbb G_t,$$ t > 1,$ denote the connected
components of $\mathbb G_{R/I}.$ Moreover, let $I_i, i = 1,\ldots,t,$ denote the intersection of all
minimal primes of $V(I)$ that are vertices of $\mathbb G_i.$ Then
\[
H^c_I(R) \simeq \oplus_{i = 1}^t H^c_{I_i}(R)
\]
as it is a consequence of the Mayer-Vietoris
sequence for local cohomology (cf.
\cite[Proposition 2.1]{gL2} for the details). Clearly $c = \height I_i, i = 1, \ldots, t,$ and
$H^c_{I_i}(R) \not= 0.$ Moreover
\[
\Ext_R^c(H^c_I(R), R) \simeq \oplus_{i=1}^t \Ext_R^c(H^c_{I_i}(R), R)
\]
and therefore $B \simeq B_1 \times \ldots \times B_t,$ where $B_i = \Ext_R^c(H^c_{I_i}(R), R),  i= 1,\ldots, t,$ are local rings as shown by (i) $\Longrightarrow$ (ii). Because of $t > 1$ it follows that $B$ is not a local ring.
\end{proof}

As a corollary of the previous statement we are able to describe the number of connected
components of $\mathbb G_{R/I}$ in terms of  the ring structure of $B = \Ext_R^c(H^c_I(R), R).$

\begin{corollary} \label{4.5} With the notation of Theorem \ref{4.4} the ring $B$
is a semi-local ring. The number of maximal ideals of $B$ is equal to the number of connected components of $\mathbb G_{R/I}.$
\end{corollary}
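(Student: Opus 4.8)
The plan is to deduce the statement directly from the proof of Theorem \ref{4.4}, so that essentially no new argument is needed beyond packaging what is already there. Recall from that proof that, writing $t$ for the number of connected components $\mathbb G_1, \ldots, \mathbb G_t$ of $\mathbb G_{R/I}$ and $I_i$ for the intersection of those minimal primes of $I$ which are the vertices of $\mathbb G_i$, there is an isomorphism of rings
\[
B \simeq B_1 \times \cdots \times B_t, \qquad B_i := \Ext_R^c(H^c_{I_i}(R), R),
\]
induced by the decomposition $H^c_I(R) \simeq \oplus_{i=1}^t H^c_{I_i}(R)$ together with the identification $B \simeq \Ext_R^c(H^c_I(R), R)$ of Theorem \ref{3.2} (a); moreover $\height I_i = c$ and $\mathbb G_{R/I_i} = \mathbb G_i$ is connected for each $i$. (When $t = 1$ this reads $B = B_1$, which is already a local ring by Theorem \ref{4.4}, (i) $\Longrightarrow$ (ii).) Thus the whole matter reduces to two points: each factor $B_i$ is a local ring, and a finite product of local rings is semi-local with the expected number of maximal ideals.

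For the first point I would argue as follows. Since $\mathbb G_{R/I_i}$ is connected -- equivalently $V((I_i)_d) = V(I_i)$ is connected in codimension one, by the remark following Definition \ref{2.5} -- and $\height I_i = c$, Theorem \ref{4.4} in the direction (i) $\Longrightarrow$ (ii), which rests on Theorem \ref{3.2} (d) together with Lemma \ref{4.2}, shows that $B_i = \Ext_R^c(H^c_{I_i}(R), R)$ is a local ring; denote its maximal ideal by $\mathfrak n_i$.

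For the second point, recall that the ideals of a finite product $B_1 \times \cdots \times B_t$ are precisely the products $\mathfrak a_1 \times \cdots \times \mathfrak a_t$ of ideals $\mathfrak a_i \subseteq B_i$, and that such a product is a maximal ideal exactly when $\mathfrak a_j$ is a maximal ideal of $B_j$ for a single index $j$ while $\mathfrak a_i = B_i$ for all $i \neq j$. As each $B_i$ is local, the maximal ideals of $B$ are therefore exactly
\[
\mathfrak m_j = B_1 \times \cdots \times B_{j-1} \times \mathfrak n_j \times B_{j+1} \times \cdots \times B_t, \qquad j = 1, \ldots, t,
\]
so $B$ has precisely $t$ maximal ideals. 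Hence $B$ is a semi-local ring and the number of its maximal ideals equals the number $t$ of connected components of $\mathbb G_{R/I}$, as claimed.

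I do not expect a genuine obstacle here: the substantial inputs -- the decomposition $H^c_I(R) \simeq \oplus_{i=1}^t H^c_{I_i}(R)$ from the Mayer--Vietoris sequence (cf. \cite[Proposition 2.1]{gL2}), the equalities $\height I_i = c$, and the local property of each $B_i$ -- have all been established in the proof of Theorem \ref{4.4}. The only point requiring a moment of care is to use that decomposition as an isomorphism \emph{of rings}, not merely of $R$-modules, so that each local factor $B_i$ contributes exactly one maximal ideal of $B$; this is exactly what the ring isomorphism displayed above provides.
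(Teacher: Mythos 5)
Your proof is correct and follows essentially the same route as the paper: both arguments feed the decomposition $H^c_I(R)\simeq\oplus_{i=1}^t H^c_{I_i}(R)$ from the proof of Theorem~\ref{4.4} through the identification $B\simeq\Ext_R^c(H^c_I(R),R)$ to obtain $B\simeq B_1\times\cdots\times B_t$ with each $B_i$ local by the (i)~$\Longrightarrow$~(ii) direction of Theorem~\ref{4.4}. The only cosmetic difference is that the paper first notes $B$ is semi-local because it is a finite $R$-module before invoking the decomposition, whereas you read off semi-locality directly from the product of local rings.
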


\begin{proof} First $B$ is a semi-local ring as follows because it is a finitely
generated $R$-module. If $\mathbb G_{R/I}$ is connected in codimension one, then
$B$ is a local ring (cf. Theorem \ref{4.4}). Then the claim follows as in the proof
of Theorem \ref{4.4} by $H^c_I(R) \simeq \oplus_{i=1}^t H^c_{I_i}(R).$ Here $I_i, i = 1, \ldots, t,$ denotes  the intersection of all
minimal primes of $V(I)$ that are vertices of $\mathbb G_i,$ the connected components of
$\mathbb G_{R/I}.$
\end{proof}

\begin{remark} \label{4.6} The proof of the statements (b) and (c) of Theorem \ref{1.1}
follows by Lemma \ref{4.2} and Remark \ref{4.3}. The claim (d) of Theorem \ref{1.1} is shown in Theorem \ref{4.4}. Finally (c) is a consequence of the results by Lyubeznik (cf. \cite{gL2})
and Zhang (cf. \cite{wZ}).
\end{remark}

\section{On Lyubeznik numbers}
In this section let $(R,\mathfrak m)$ be a regular local ring
containing a field. Let $I \subset R$ be an ideal of $R.$ We will
add a few results concerning the Lyubeznik numbers
\[
\dim_k \Hom_R(k, H^j_{\mathfrak m}(H^i_I(R))) = \dim_k \Ext_R^j(k,
H^i_I(R))
\]
in general. That means, we are interested in them for all pairs
$(j,i)$ not necessary for $(j,i) = (d,c).$ As a first step towards
this direction we improve the estimate in Remark \ref{4.1} (b).
To this end we use the Hartshorne-Lichtenbaum Vanishing Theorem.
It yields that $H^n_I(R) = 0, n = \dim R,$ whenever $(R,
\mathfrak m)$ is a complete local domain and $I$ is an ideal with
$\dim R/I > 0$ (cf. \cite[Theorem 3.1]{rH3} or \cite[Theorem
2.20]{pS1}).

\begin{lemma} \label{5.1} Let $(R,\mathfrak m)$ denote a regular
local ring with $\dim R = n.$ Let $I$ be an ideal of $R$ and
$c = \height I < n.$ Then
\[
\dim H^i_I(R) \leq n - i - 1
\]
for all $c < i \leq n$ and $\dim H^c_I(R) = n - c.$
\end{lemma}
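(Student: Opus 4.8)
The plan is to prove the two assertions separately. The equality $\dim H^c_I(R) = n-c$ is the easier part: since $R$ is regular (hence Gorenstein) of dimension $n$, the module of deficiency $K(R/I) = \Ext_R^c(R/I,R)$ is the canonical module of $R/I$, so $\dim \Ext_R^c(R/I,R) = d = n-c$, and in particular $\Ext_R^c(R/I,R) \neq 0$. Passing to the direct limit $H^c_I(R) \simeq \varinjlim \Ext_R^c(R/I^\alpha,R)$ and using that each $R/I^\alpha$ has the same support, hence the same dimension $d$ for its canonical module, one sees that $H^c_I(R)$ maps onto (or at least has a subquotient equal to) a module of dimension $n-c$; combined with the upper bound $\dim H^c_I(R) \leq n-c$ from Remark \ref{4.1}(b), this forces equality. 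Alternatively, $\Ann_R H^c_I(R) = I_d$ (this is essentially Theorem \ref{3.1}(b) read through the canonical module), so $\dim H^c_I(R) = \dim R/I_d = d = n-c$.

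The substance of the lemma is the inequality $\dim H^i_I(R) \leq n-i-1$ for $c < i \leq n$. Here the idea is a localization argument combined with the Hartshorne--Lichtenbaum Vanishing Theorem. Suppose for contradiction that $\dim H^i_I(R) \geq n-i$ for some $i > c$. Since $\dim H^i_I(R) \leq n-i$ always (Remark \ref{4.1}(b)), this means $\dim H^i_I(R) = n-i$, so there is a prime $\mathfrak p \in \Supp_R H^i_I(R)$ with $\dim R/\mathfrak p = n-i$. Localizing at $\mathfrak p$, the ring $R_{\mathfrak p}$ is regular of dimension $\dim R_{\mathfrak p} = n - (n-i) = i$, and $H^i_{IR_{\mathfrak p}}(R_{\mathfrak p}) = (H^i_I(R))_{\mathfrak p} \neq 0$. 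Now $i = \dim R_{\mathfrak p}$, and since $i > c = \height I$, the ideal $IR_{\mathfrak p}$ is proper with $\dim R_{\mathfrak p}/IR_{\mathfrak p} = \height_{R_{\mathfrak p}}(\text{stuff})$... more carefully: because $i > c$, the prime $\mathfrak p$ does not contain a minimal prime of $I$ of height $c$ with $\dim R/\mathfrak p$ as large as $n-c$; one needs $\dim R_{\mathfrak p}/IR_{\mathfrak p} > 0$, equivalently $\mathfrak p \notin \Min(I)$ when $IR_{\mathfrak p}$ is $\mathfrak p R_{\mathfrak p}$-primary — and indeed if $IR_{\mathfrak p}$ were $\mathfrak p R_{\mathfrak p}$-primary then $\height IR_{\mathfrak p} = i > c = \height I \geq \height IR_{\mathfrak p}$, a contradiction. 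Hence $\dim R_{\mathfrak p}/IR_{\mathfrak p} \geq 1$. Passing to the completion $\widehat{R_{\mathfrak p}}$ (which is a regular local ring, in particular a domain) and applying the Hartshorne--Lichtenbaum Vanishing Theorem with $\dim > 0$ gives $H^i_{IR_{\mathfrak p}}(R_{\mathfrak p}) = H^{\dim R_{\mathfrak p}}_{IR_{\mathfrak p}}(R_{\mathfrak p}) = 0$, the desired contradiction.

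I expect the main obstacle to be the bookkeeping in the localization step: one must verify cleanly that for the critical prime $\mathfrak p$ realizing $\dim H^i_I(R) = n-i$, the localized ideal $IR_{\mathfrak p}$ satisfies $\dim R_{\mathfrak p}/IR_{\mathfrak p} \geq 1$ so that Hartshorne--Lichtenbaum applies (the Vanishing Theorem is vacuous when $\dim R/I = 0$). The point, as sketched above, is that $\dim R_{\mathfrak p} = i > c = \height I \geq \height IR_{\mathfrak p}$, so $IR_{\mathfrak p}$ cannot be $\mathfrak p R_{\mathfrak p}$-primary, whence $\dim R_{\mathfrak p}/IR_{\mathfrak p} > 0$. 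One also needs that passage to completion does not kill the nonvanishing of the top local cohomology, which is standard since $H^{\dim}_{I}(R_{\mathfrak p}) \otimes_{R_{\mathfrak p}} \widehat{R_{\mathfrak p}} \simeq H^{\dim}_{I\widehat{R_{\mathfrak p}}}(\widehat{R_{\mathfrak p}})$ by flat base change and $H^{\dim}$ is right exact. The remaining details are routine.
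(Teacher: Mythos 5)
Your strategy is the same as the paper's: for the equality at level $c$, localize at a height-$c$ minimal prime of $I$; for the inequality, take $\mathfrak p \in \Supp H^i_I(R)$ with $\dim R/\mathfrak p \geq n-i$, observe that $H^i_{IR_{\mathfrak p}}(R_{\mathfrak p}) \neq 0$ forces $\height \mathfrak p = i = \dim R_{\mathfrak p}$, and then invoke the Hartshorne--Lichtenbaum Vanishing Theorem over $\widehat{R_{\mathfrak p}}$. You have also correctly isolated the one delicate point, namely that HLV only gives vanishing when $\dim R_{\mathfrak p}/IR_{\mathfrak p} > 0$ (the paper's own proof passes over this silently). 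But your resolution of that point is wrong: the inequality ``$\height I \geq \height IR_{\mathfrak p}$'' goes the wrong way. Heights can only increase under localization: $\height IR_{\mathfrak p} = \min\{\height \mathfrak q : \mathfrak q \in \Min(I),\ \mathfrak q \subseteq \mathfrak p\} \geq \height I$, and it is perfectly possible for $IR_{\mathfrak p}$ to be $\mathfrak p R_{\mathfrak p}$-primary of height $i > c$; this happens exactly when $\mathfrak p$ is a minimal prime of $I$ of height $i$. So there is no contradiction where you claim one, and the argument does not close.

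The gap is not cosmetic, because the asserted bound actually fails for ideals whose minimal primes have different heights. Take $R = k[[x,y,z]]$ and $I = (xy,xz) = (x) \cap (y,z)$, so $n = 3$ and $c = 1$. For $\mathfrak p = (y,z)$ the element $x$ is a unit in $R_{\mathfrak p}$, hence $IR_{\mathfrak p} = \mathfrak p R_{\mathfrak p}$ and $H^2_I(R)_{\mathfrak p} \simeq H^2_{\mathfrak p R_{\mathfrak p}}(R_{\mathfrak p}) \neq 0$; thus $\dim H^2_I(R) \geq \dim R/\mathfrak p = 1 > 0 = n - 2 - 1$. The statement (and your argument, and the paper's) becomes correct under the additional hypothesis that every minimal prime of $I$ has height $c$: then any $\mathfrak p \in V(I)$ with $\dim R_{\mathfrak p} = i > c$ contains a minimal prime of height $c$, so $\dim R_{\mathfrak p}/IR_{\mathfrak p} \geq i - c > 0$ and HLV applies exactly as you intended. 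Finally, in the first half your direct-limit justification that $\dim H^c_I(R) \geq n-c$ is vague (supports can shrink in a direct limit); the clean argument is the one you hint at and the paper uses, namely that a height-$c$ minimal prime $\mathfrak p$ of $I$ satisfies $H^c_{IR_{\mathfrak p}}(R_{\mathfrak p}) = H^c_{\mathfrak p R_{\mathfrak p}}(R_{\mathfrak p}) \neq 0$, so $\mathfrak p \in \Supp H^c_I(R)$.
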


\begin{proof} Let $\mathfrak p \in V(I)$ denote a prime ideal such
that $\dim R_{\mathfrak p} = c.$ Then
\[
0 \not= H^c_{I R_{\mathfrak p}}(R_{\mathfrak p}) \simeq H^c_I(R)
\otimes_R R_{\mathfrak p}
\]
because $c = \dim R_{\mathfrak p}$ and $\Rad IR_{\mathfrak p} =
\mathfrak p R_{\mathfrak p}.$ Recall that $\mathfrak p \in V(I)$
is a minimal prime ideal. Therefore $ \dim H^c_I(R) \geq \dim
R/\mathfrak p = n-c.$ The equality is true since $\Supp H^c_I(R)
\subseteq V(I).$

Now let $c < i \leq n.$ First of all note that $H^n_I(R) = 0$ as
follows by the Hartshorne-Lichtenbaum Vanishing Theorem (cf.
\cite{rH3} or \cite[Theorem 2.20]{pS1}). Now suppose the contrary
to the claim. That is $\dim  H^i_I(R) \geq n - i$ for a certain $c
< i < n.$ Then there is a prime ideal $\mathfrak p \in \Supp
H^i_I(R)$ such that $\dim R/\mathfrak p \geq n - i.$ Therefore
$H^i_{I R_{\mathfrak p}}(R_{\mathfrak p}) \not= 0$ and $i < \dim
R_{\mathfrak p}$ as follows again by the Hartshorne-Lichtenbaum
Vanishing Theorem. Moreover $\dim R/\mathfrak p = n - \dim
R_{\mathfrak p} \geq n -i,$ and therefore $\dim R_{\mathfrak p}
\geq i,$ which is a contradiction.
\end{proof}

For a better understanding of the Lyubeznik numbers we need an
auxiliary construction of a certain complex $C^{\cdot}(I)$ for an
ideal $I$ of $R.$ Here we assume that $(R,\mathfrak m)$ is a Gorenstein
ring.
Let $R \qism E^{\cdot}$ denote a minimal injective resolution of
$R.$ Because of $\Gamma_I(E^{\cdot})^i = 0$ for all $i < c = \height I < n$ there
is a homomorphism of complexes
\[
0 \to H^c_I(R)[-c] \to \Gamma_I(E^{\cdot}),
\]
where $H^c_I(R)$ is considered as a complex concentrated in
homological degree zero.

\begin{definition} \label{5.2} The cokernel of the embedding  $H^c_I(R)[-c] \to \gam (E^{\cdot})$
is defined as $C^{\cdot}_R(I),$ the truncation complex with respect to $I.$ So there is a short
exact sequence of complexes of $R$-modules
\[
0 \to H^c_I(R)[-c] \to \gam (E^{\cdot}) \to C^{\cdot}_R(I) \to 0.
\]
We observe that $H^i(C^{\cdot}_R(I)) \simeq H^i_I(R)$ for all $i \not=
c$ while $H^c(C^{\cdot}_R(I)) = 0.$
\end{definition}

By applying the derived functor $\Rgam$ of the
section functor it induces (in the derived category) a
short exact sequence of complexes
\[
0 \to \Rgam(H^c_I(R))[-c] \to E[-n] \to \Rgam(C^{\cdot}(I)) \to 0.
\]
Recall that $\Rgam(\Gamma_I(E^{\cdot})) \simeq \Gamma_{\mathfrak
m}(\Gamma_I(E^{\cdot})) \simeq \Gamma_{\mathfrak m}(E^{\cdot})
\simeq E[-n],$ where $E = E_R(R/\mathfrak m)$ denotes the
injective hull of the residue field.

In order to compute the hypercohomology $H^i_{\mathfrak
m}(C^{\cdot}(I))$ there is the following $E_2$-term spectral
sequence (cf. \cite{cW} for the details)
\[
E_2^{p,q} = H^p_{\mathfrak m}(H^q(C^{\cdot}(I))) \Rightarrow
E^{p+q}_{\infty} = H^{p+q}_{\mathfrak m}(C^{\cdot}(I)).
\]
Now recall that $H^q(C^{\cdot}(I)) = H^q_I(R)$ for $c < q <n,$ and
$H^q(C^{\cdot}(I)) = 0$ for $q \leq c$ resp. $q \geq n.$ We notice
that $H^n_I(R) = 0$ as a consequence of the Hartshorne-Lichtenbaum
Vanishing Theorem.

\begin{proposition} \label{5.3} Let $(R,\mathfrak m)$ be a regular local
ring. With notation of Definition \ref{5.2} there
is a short exact sequence
\[
0 \to H^{n-1}_{\mathfrak m}(C^{\cdot}(I)) \to H^d_{\mathfrak
m}(H^c_I(R)) \to E \to 0
\]
and isomorphisms $H^{j-1}_{\mathfrak m}(C^{\cdot}(I)) \simeq
H^{j-c}_{\mathfrak m}(H^c_I(R))$ for all $j < n$ and all $j > n =
\dim R.$
\end{proposition}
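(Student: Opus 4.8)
The plan is to feed the short exact sequence of complexes from Definition \ref{5.2},
\[
0 \to H^c_I(R)[-c] \to \gam(E^{\cdot}) \to C^{\cdot}_R(I) \to 0,
\]
into the section functor $\Rgam$ and read off the resulting long exact sequence of hypercohomology modules together with the spectral sequence of $C^{\cdot}(I)$. First I would recall, as noted in the excerpt, that $\Rgam(\gam(E^{\cdot})) \simeq \Gamma_{\mathfrak m}(E^{\cdot}) \simeq E[-n]$, since $E^{\cdot}$ is a minimal injective resolution of the Gorenstein ring $R$ and $H^i_{\mathfrak m}(R) = 0$ for $i \neq n$. Hence the induced triangle reads
\[
\Rgam(H^c_I(R))[-c] \to E[-n] \to \Rgam(C^{\cdot}(I)) \to .
\]
Taking cohomology in degree $j$, and using that $E[-n]$ has cohomology only in degree $n$ (where it equals $E$), gives for every $j \neq n, n+1$ an isomorphism
\[
H^j(\Rgam(C^{\cdot}(I))) \simeq H^{j+1}(\Rgam(H^c_I(R))[-c]) = H^{j+1-c}_{\mathfrak m}(H^c_I(R)),
\]
i.e. $H^{j-1}_{\mathfrak m}(C^{\cdot}(I)) \simeq H^{j-c}_{\mathfrak m}(H^c_I(R))$ after reindexing $j \mapsto j-1$; this covers all $j < n$ and all $j > n$. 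In the two remaining degrees the long exact sequence becomes
\[
0 \to H^{n-1-c}_{\mathfrak m}(H^c_I(R)) \to H^{n-1}(\Rgam(C^{\cdot}(I))) \to 0 \to H^{n-c}_{\mathfrak m}(H^c_I(R)) \to E \to H^{n}(\Rgam(C^{\cdot}(I))) \to 0,
\]
and since $n - c = d$ the middle part yields exactly $0 \to H^{n-1}_{\mathfrak m}(C^{\cdot}(I)) \to H^d_{\mathfrak m}(H^c_I(R)) \to E \to 0$, provided one knows that the connecting map $E \to H^n(\Rgam(C^{\cdot}(I)))$ is zero.

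That last vanishing is the only real point requiring an argument, and it is where I would identify the hypercohomology $H^\bullet(\Rgam(C^{\cdot}(I)))$ with $H^\bullet_{\mathfrak m}(C^{\cdot}(I))$ and use the spectral sequence $E_2^{p,q} = H^p_{\mathfrak m}(H^q(C^{\cdot}(I))) \Rightarrow H^{p+q}_{\mathfrak m}(C^{\cdot}(I))$. Since $H^q(C^{\cdot}(I)) = H^q_I(R)$ for $c < q < n$ and vanishes for $q \leq c$ or $q \geq n$ (using Hartshorne--Lichtenbaum for $H^n_I(R)=0$), the abutment in total degree $n$ collects contributions $E_2^{p,q}$ with $p+q=n$, $c<q<n$, hence $0 < p < n - c = d \leq n$; in particular $H^n_{\mathfrak m}(C^{\cdot}(I))$ receives no contribution from $q < c$ or from $q = n$, so there is no summand forcing it to surject onto $E$, and by Remark \ref{4.1}(b) together with Lemma \ref{5.1} one has $\dim H^q_I(R) \le n - q - 1$ for $c < q < n$, whence $H^p_{\mathfrak m}(H^q_I(R)) = 0$ once $p > \dim H^q_I(R)$. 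The key estimate is that for $p + q = n$ with $c < q < n$ we have $p = n - q > \dim H^q_I(R)$ is compatible only in a controlled range, and in fact the relevant abutment term sits in total degree $n-1$, not $n$; tracking this carefully shows $H^n_{\mathfrak m}(C^{\cdot}(I)) = 0$, which forces the connecting homomorphism $E \to H^n_{\mathfrak m}(C^{\cdot}(I))$ to be the zero map and makes $H^d_{\mathfrak m}(H^c_I(R)) \to E$ surjective.

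The main obstacle I anticipate is precisely the bookkeeping in the spectral sequence near total degree $n$: one must check that $H^n_{\mathfrak m}(C^{\cdot}(I)) = 0$ (equivalently, that every $E_2^{p,q}$ with $p+q = n$, $c < q < n$ vanishes), using the dimension bound $\dim H^q_I(R) \leq n - q - 1$ from Lemma \ref{5.1} to kill $H^p_{\mathfrak m}(H^q_I(R))$ whenever $p = n - q$, and simultaneously that these $E_2$-terms do not obstruct the identification in degree $n-1$ either — there the term $E_2^{n-1-c,\,c}$ is the only surviving one since $H^c(C^{\cdot}(I)) = 0$ and all higher $q$ give $p = n - 1 - q < \dim$ only in a range one must exclude. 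Once the two boundary degrees are pinned down, the short exact sequence and the family of isomorphisms drop out of the long exact sequence mechanically.
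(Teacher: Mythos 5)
Your proposal is the paper's proof in outline: everything reduces, via the long exact hypercohomology sequence of the triangle $\Rgam(H^c_I(R))[-c] \to E[-n] \to \Rgam(C^{\cdot}(I))$, to the single claim $H^n_{\mathfrak m}(C^{\cdot}(I)) = 0$, which follows from the spectral sequence together with Lemma \ref{5.1}. You correctly isolate that claim and even state the decisive inequality, but both the bookkeeping and the closing argument are garbled as written. The boundary segment of the long exact sequence is
\[
0 \to H^{n-1}_{\mathfrak m}(C^{\cdot}(I)) \to H^{n-c}_{\mathfrak m}(H^c_I(R)) \to E \to H^{n}_{\mathfrak m}(C^{\cdot}(I)) \to H^{n+1-c}_{\mathfrak m}(H^c_I(R)) \to 0;
\]
in particular $H^{n-1}_{\mathfrak m}(C^{\cdot}(I))$ is the kernel of $H^d_{\mathfrak m}(H^c_I(R)) \to E$, not isomorphic to $H^{n-1-c}_{\mathfrak m}(H^c_I(R))$ as your displayed sequence asserts, and the unconditional isomorphisms hold for $j \neq n, n+1$ in the proposition's indexing (the case $j = n+1$ also requires the vanishing of $H^n_{\mathfrak m}(C^{\cdot}(I))$, after which both sides are zero). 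As for that vanishing, it is a one-line consequence of what you already wrote and needs none of the hedging: the only $q$ with $H^q(C^{\cdot}(I)) \neq 0$ satisfy $c < q < n$, and for such $q$ Lemma \ref{5.1} gives $\dim H^q_I(R) \leq n - q - 1 < n - q$, so Grothendieck vanishing yields $E_2^{n-q,q} = H^{n-q}_{\mathfrak m}(H^q_I(R)) = 0$; hence every $E_2^{p,q}$ with $p + q = n$ vanishes and $H^n_{\mathfrak m}(C^{\cdot}(I)) = 0$. The clauses ``the relevant abutment term sits in total degree $n-1$, not $n$'' and ``$E_2^{n-1-c,\,c}$ is the only surviving one since $H^c(C^{\cdot}(I)) = 0$'' are false --- the relevant total degree is $n$, nothing needs to be checked in degree $n-1$, and that $E_2$-term is zero precisely because $H^c(C^{\cdot}(I)) = 0$ --- so they should be deleted.
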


\begin{proof} The proof follows by the long exact cohomology
sequence of the above short exact sequence of complexes in the derived
category. The only
claim we have to show is the vanishing of $H^n_{\mathfrak
m}(C^{\cdot}(I)).$ To this end we apply the previous spectral
sequence. By virtue of Lemma \ref{5.1} we know that $\dim
H^q_I(R) < n - q$ for all $q \not= c.$ Therefore $E^{n-q,q}_2 =
H^{n-q}_{\mathfrak m}(H^q_I(R)) = 0$ for all $q \not= c.$ But this
provides the vanishing of $H^n_{\mathfrak m}(C^{\cdot}(I)),$ as
required.
\end{proof}

As an application of our investigations we prove a
slight improvement of a duality result shown by Blickle (cf.
\cite[Theorem 1.1]{mB}).

\begin{corollary} \label{5.4} Let $I \subset R$ denote an ideal
of a regular local ring $(R, \mathfrak m)$ containing a field.
Suppose that $\height c < n$ and  $\Supp H^i_I(R) \subset V(\mathfrak m)$ for all $i
\not= c.$ Then the following is true:
\begin{itemize}
\item[(a)] There is a short exact sequence
\[
0 \to H^{n-1}_I(R) \to H^d_{\mathfrak
m}(H^c_I(R)) \to E \to 0.
\]
\item[(b)] For $j < n$ there are isomorphisms $H^{j-1}_I(R)
 \simeq H^{j-c}_{\mathfrak m}(H^c_I(R)).$
\end{itemize}
\end{corollary}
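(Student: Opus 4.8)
The plan is to deduce both statements directly from Proposition \ref{5.3} by identifying the hypercohomology $H^j_{\mathfrak m}(C^{\cdot}(I))$ with ordinary local cohomology modules $H^{j+1}_I(R)$ under the hypothesis that $\Supp H^i_I(R) \subset V(\mathfrak m)$ for all $i \neq c$. First I would invoke the $E_2$-term spectral sequence
\[
E_2^{p,q} = H^p_{\mathfrak m}(H^q(C^{\cdot}(I))) \Rightarrow H^{p+q}_{\mathfrak m}(C^{\cdot}(I))
\]
from Definition \ref{5.2} and the discussion following it. Since $H^q(C^{\cdot}(I)) = H^q_I(R)$ for $c < q < n$ and vanishes otherwise, and since each such $H^q_I(R)$ has support in $V(\mathfrak m)$, the module $H^q_I(R)$ is already $\mathfrak m$-torsion; hence $H^0_{\mathfrak m}(H^q_I(R)) = H^q_I(R)$ while $H^p_{\mathfrak m}(H^q_I(R)) = 0$ for all $p > 0$. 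Thus the spectral sequence has only the row $p = 0$, collapses at $E_2$, and yields $H^j_{\mathfrak m}(C^{\cdot}(I)) \simeq H^j(C^{\cdot}(I)) \simeq H^j_I(R)$ for $c < j < n$ (and $H^j_{\mathfrak m}(C^{\cdot}(I)) = 0$ for $j \leq c$ or $j \geq n$; the latter also re-proves the vanishing used in Proposition \ref{5.3}).

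Next I would combine this identification with Proposition \ref{5.3}. The short exact sequence there reads
\[
0 \to H^{n-1}_{\mathfrak m}(C^{\cdot}(I)) \to H^d_{\mathfrak m}(H^c_I(R)) \to E \to 0,
\]
and since $c < n-1$ would be needed for the collapse argument to give $H^{n-1}_{\mathfrak m}(C^{\cdot}(I)) \simeq H^{n-1}_I(R)$, I should check the boundary case $c = n-1$ separately (there $d = 1$ and $H^{n-1}_I(R) = H^c_I(R)$ is not a summand of $C^{\cdot}(I)$, so the statement must be read with $H^{n-1}_I(R)$ interpreted appropriately, or the hypothesis $\height I < n$ together with the support condition forces the relevant module to behave correctly). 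Granting $c < n-1$, substituting $H^{n-1}_{\mathfrak m}(C^{\cdot}(I)) \simeq H^{n-1}_I(R)$ gives part (a). For part (b), Proposition \ref{5.3} provides isomorphisms $H^{j-1}_{\mathfrak m}(C^{\cdot}(I)) \simeq H^{j-c}_{\mathfrak m}(H^c_I(R))$ for $j < n$; replacing the left side via $H^{j-1}_{\mathfrak m}(C^{\cdot}(I)) \simeq H^{j-1}_I(R)$ (valid for $c < j-1 < n$, i.e. $c+1 < j < n+1$) yields $H^{j-1}_I(R) \simeq H^{j-c}_{\mathfrak m}(H^c_I(R))$ as claimed; for the remaining small values of $j$ both sides vanish.

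The main obstacle is not the spectral-sequence bookkeeping, which is routine once the support hypothesis is in place, but rather the careful handling of the degree endpoints: one must confirm that the collapse of the spectral sequence genuinely identifies $H^{n-1}_{\mathfrak m}(C^{\cdot}(I))$ with $H^{n-1}_I(R)$ and not merely with a subquotient, and one must track precisely for which $j$ the isomorphisms in (b) are non-trivial versus vacuous. I would also want to note that the hypothesis $\Supp H^i_I(R) \subset V(\mathfrak m)$ for $i \neq c$ is exactly what makes Remark \ref{4.1} and Lemma \ref{5.1} compatible — in the regular case the relevant $H^i_I(R)$ are injective and $\mathfrak m$-torsion, hence direct sums of copies of $E$, which is consistent with the collapse — so the argument is internally coherent with the standing assumptions of the section.
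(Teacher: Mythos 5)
Your argument is exactly the paper's: the support hypothesis forces $H^p_{\mathfrak m}(H^q(C^{\cdot}(I)))=0$ for $p>0$, so the spectral sequence degenerates to $H^q_{\mathfrak m}(C^{\cdot}(I))\simeq H^q(C^{\cdot}(I))\simeq H^q_I(R)$, and substituting into Proposition \ref{5.3} gives both parts. Your extra attention to the degree endpoints (the cases $j-1=c$ and $c=n-1$, where $H^{n-1}(C^{\cdot}(I))$ is $0$ rather than $H^c_I(R)$) is a legitimate caveat that the paper's own proof silently glosses over, but it does not change the method.
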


\begin{proof} Because of $\Supp H^i_I(R) \subset V(\mathfrak m)$
for all $i \not= c$ it follows that $H^p_{\mathfrak
m}(H^q(C^{\cdot}(I)) = 0$ for all $p \not= 0.$ So the previous
spectral sequence degenerates to isomorphisms
\[
H^q_{\mathfrak m}(C^{\cdot}(I)) \simeq H^0_{\mathfrak m}(H^q(C^{\cdot}(I))
\]
for all $q \in \mathbb Z.$ Then the claim is a consequence of the
statements in Proposition \ref{5.3}. Recall that
$H^0_{\mathfrak m}(H^q(C^{\cdot}(I) \simeq H^q(C^{\cdot}(I))$ for all
$q \in \mathbb Z$ because $\Supp H^i_I(R) \subset V(\mathfrak m)$ for all $i
\not= c$ by the assumption.
\end{proof}

Under the assumption of Corollary \ref{5.4} Blickle (cf.
\cite[Theorem 1.1]{mB}) proved the following equalities
\[
\dim_k \Hom_R(k,H^0_{\mathfrak m}(H^{j-1}_I(R))) =
\dim_k \Hom_R(k, H^{j-c}_{\mathfrak m}(H^c_I(R)))  - \delta_{j-c,d}
\]
for all $j \in \mathbb N.$ In fact, this is a consequence of the
present Corollary \ref{5.4}. The assumption $\Supp H^i_I(R) \subset V(\mathfrak m)$ for all $i \not= c$ is fulfilled whenever $\cd IR_{\mathfrak p} = \height I$ for all $\mathfrak p \in
V(I) \setminus \{\mathfrak m\}.$

Another result of this type is the following.

\begin{corollary} \label{5.5} With the notation of \ref{5.4}
let $I$ denote an ideal of $R.$ Suppose there is an integer $c < a < n$
such that $H^i_I(R) = 0$ for all $i \not= c, a.$

\begin{itemize}
\item[(a)] There is a short exact sequence
\[
0 \to H^{n-a-1}_{\mathfrak m}(H^a_I(R)) \to H^d_{\mathfrak
m}(H^c_I(R)) \to E \to 0.
\]
\item[(b)] For $j < n$ there are isomorphisms $H^{j-a-1}_{\mathfrak m}(H^a_I(R))
 \simeq H^{j-c}_{\mathfrak m}(H^c_I(R)).$
\end{itemize}
\end{corollary}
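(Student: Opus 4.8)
The plan is to run the same machinery as in the proof of Corollary \ref{5.4}, using that the present hypothesis pins the cohomology of the truncation complex down to a single degree. Recall from Definition \ref{5.2} that $H^i(C^{\cdot}(I)) \simeq H^i_I(R)$ for all $i \neq c$ while $H^c(C^{\cdot}(I)) = 0$. Since by assumption $H^i_I(R) = 0$ for every $i \neq c, a$ and $c < a$, it follows that $H^i(C^{\cdot}(I)) = 0$ for all $i \neq a$, whereas $H^a(C^{\cdot}(I)) \simeq H^a_I(R)$. Thus $C^{\cdot}(I)$ is, in the derived category, quasi-isomorphic to $H^a_I(R)[-a]$.

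Next I would compute the hypercohomology $H^i_{\mathfrak m}(C^{\cdot}(I))$ by means of the $E_2$-term spectral sequence
\[
E_2^{p,q} = H^p_{\mathfrak m}(H^q(C^{\cdot}(I))) \Rightarrow H^{p+q}_{\mathfrak m}(C^{\cdot}(I)).
\]
As the only nonvanishing row is $q = a$, the spectral sequence degenerates and yields isomorphisms $H^i_{\mathfrak m}(C^{\cdot}(I)) \simeq H^{i-a}_{\mathfrak m}(H^a_I(R))$ for all $i \in \mathbb Z$; equivalently, one applies the derived functor $\Rgam$ to the quasi-isomorphism $C^{\cdot}(I) \simeq H^a_I(R)[-a]$ and passes to cohomology.

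Finally I would feed this into Proposition \ref{5.3}, which furnishes the short exact sequence $0 \to H^{n-1}_{\mathfrak m}(C^{\cdot}(I)) \to H^d_{\mathfrak m}(H^c_I(R)) \to E \to 0$ together with the isomorphisms $H^{j-1}_{\mathfrak m}(C^{\cdot}(I)) \simeq H^{j-c}_{\mathfrak m}(H^c_I(R))$ for $j < n$. Substituting $H^{n-1}_{\mathfrak m}(C^{\cdot}(I)) \simeq H^{n-a-1}_{\mathfrak m}(H^a_I(R))$ gives (a), and substituting $H^{j-1}_{\mathfrak m}(C^{\cdot}(I)) \simeq H^{j-a-1}_{\mathfrak m}(H^a_I(R))$ gives (b). I do not expect a genuine obstacle here; the argument is essentially degree bookkeeping built on top of the truncation complex. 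The only points that deserve a word are that the hypothesis $c < a$ is what guarantees $H^a(C^{\cdot}(I))$ really is $H^a_I(R)$ rather than being killed by the truncation at $c$, that $a < n$ is consistent with $H^n_I(R) = 0$ by the Hartshorne--Lichtenbaum Vanishing Theorem, and that the degenerate case $H^a_I(R) = 0$ is harmless, the sequence in (a) then collapsing to $H^d_{\mathfrak m}(H^c_I(R)) \simeq E$.
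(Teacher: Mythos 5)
Your proposal is correct and follows the paper's own proof almost verbatim: identify $C^{\cdot}(I)$ with $H^a_I(R)[-a]$ in the derived category using the vanishing hypothesis, then substitute into Proposition \ref{5.3}. The extra remarks about the spectral sequence collapsing, the role of $c<a$, and the degenerate case are sound elaborations but do not change the argument.
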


\begin{proof} By the assumption on the vanishing of $H^i_I(R)$ it
follows that $C^{\cdot}(I) \qism H^a_I(R)[-a]$ in the derived
category. Thus, the statement is an immediate consequence of
Proposition \ref{5.3}.
\end{proof}

It would be of some interest to get an understanding of the Lyubeznik numbers in
general.

\section{On a trace map}
Let $(R, \mathfrak m)$ denote a Gorenstein ring and $n = \dim R.$ Let
$I \subset R$ denote an ideal of $\height I = c$ and $\dim R/I = d.$

\begin{lemma} \label{6.1} With the previous notation there is a natural
homomorphism
 \[
 \phi :  \Ext^d_R(k, H^c_I(R)) \to k  .
 \]
\end{lemma}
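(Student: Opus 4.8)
The plan is to construct $\phi$ from the truncation complex $C^{\cdot}(I)$ of Definition \ref{5.2}. Recall the short exact sequence of complexes
\[
0 \to H^c_I(R)[-c] \to \gam(E^{\cdot}) \to C^{\cdot}_R(I) \to 0,
\]
and that applying $\Rgam$ yields, in the derived category, the triangle
\[
\Rgam(H^c_I(R))[-c] \to E[-n] \to \Rgam(C^{\cdot}(I)) \to .
\]
First I would take the long exact cohomology sequence of this triangle in degree $n$. Since $H^n_{\mathfrak m}(C^{\cdot}(I)) = 0$ by Proposition \ref{5.3}, and $H^n(E[-n]) = E$, this gives a surjection $H^n_{\mathfrak m}(\Rgam(H^c_I(R))[-c]) = H^d_{\mathfrak m}(H^c_I(R)) \twoheadrightarrow E$ — indeed this is exactly the surjection appearing in Proposition \ref{5.3}. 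Thus there is a canonical surjective map $\psi : H^d_{\mathfrak m}(H^c_I(R)) \to E$.

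Next I would apply the functor $\Hom_R(k, -)$ to $\psi$. Since $\Hom_R(k, E) \simeq k$ canonically, this produces a homomorphism
\[
\Hom_R(k, H^d_{\mathfrak m}(H^c_I(R))) \to k.
\]
Finally, by Remark \ref{4.1}(c) (or directly, since $k$ is a field so $\Ext^d_R(k, H^c_I(R)) \simeq \Hom_R(k, H^d_{\mathfrak m}(H^c_I(R)))$ via the spectral sequence edge map, which holds over a regular ring and more generally can be obtained from the local cohomology spectral sequence applied to $H^c_I(R)$ together with the vanishing $H^j_{\mathfrak m}(H^c_I(R)) = 0$ for $j < d$ that forces the relevant edge isomorphism in bottom degree), we have an identification $\Ext^d_R(k, H^c_I(R)) \simeq \Hom_R(k, H^d_{\mathfrak m}(H^c_I(R)))$. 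Composing gives the desired natural homomorphism $\phi : \Ext^d_R(k, H^c_I(R)) \to k$.

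Alternatively — and this is how I would phrase it cleanly to avoid regularity hypotheses — $\phi$ can be read off directly as an edge homomorphism. Applying $\Hom_R(k, -)$ to the truncation sequence and using that $R$ is Gorenstein so $\Rgam(E^{\cdot}) \simeq E[-n]$, one obtains a map of spectral sequences converging to $\Ext^{\ast}_R(k, \Rgam(C^{\cdot}(I)))$ and the associated five-term exact sequence produces in the appropriate total degree the map $\Ext^d_R(k, H^c_I(R)) \to \Ext^n_R(k, E) \simeq k$; the key point making $\phi$ land in degree $d = n - c$ (and not a higher degree) is that $H^c_I(R)[-c]$ sits in cohomological degree $c$ while $C^{\cdot}(I)$ has cohomology only in degrees $> c$, so in the long exact sequence the term $H^d_{\mathfrak m}(H^c_I(R))$ maps to $H^n_{\mathfrak m}(\gam(E^{\cdot})) \simeq E$. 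The main obstacle — really the only subtlety — is bookkeeping: making sure the degree shift by $c$ is tracked correctly so that $\Ext^d$ of the first term matches $\Ext^n$ of $E$, and verifying that the construction is genuinely functorial in $I$ (i.e. compatible with the maps $R/I^{\alpha+1} \to R/I^{\alpha}$), which follows from functoriality of the minimal injective resolution and of $\Rgam$.
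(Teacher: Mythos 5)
Your ``alternatively'' paragraph is essentially the paper's proof, and it is the one that actually establishes the lemma in the stated generality: Lemma~\ref{6.1} is formulated for $(R,\mathfrak m)$ a Gorenstein ring, not a regular ring containing a field, so the identification $\Ext^d_R(k,H^c_I(R))\simeq\Hom_R(k,H^d_{\mathfrak m}(H^c_I(R)))$ from Remark~\ref{4.1}(c) that your primary route relies on is not available. The parenthetical fallback you offer --- that the spectral sequence $\Ext^p_R(k,H^q_{\mathfrak m}(H^c_I(R)))\Rightarrow\Ext^{p+q}_R(k,H^c_I(R))$ degenerates at the corner because ``$H^j_{\mathfrak m}(H^c_I(R))=0$ for $j<d$'' --- is not justified: that vanishing is nowhere established in the paper and need not hold for a general Gorenstein (or even regular) ring; what produces the degeneration in the regular-plus-field case is the \emph{injectivity} of the modules $H^j_{\mathfrak m}(H^c_I(R))$ from Remark~\ref{4.1}(a), not their vanishing. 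So the primary route, as written, proves a weaker statement.

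The paper instead applies $\operatorname{R}\Hom_R(k,-)$ directly to the truncation sequence $0\to H^c_I(R)[-c]\to\Gamma_I(E^{\cdot})\to C^{\cdot}_R(I)\to 0$, observes that the middle term represents $\Hom_R(k,\Gamma_I(E^{\cdot}))=\Hom_R(k,E^{\cdot})\simeq k[-n]$ (since the minimal injective resolution of a Gorenstein ring has $E=E_R(k)$ only in degree $n$), and reads off $\phi$ as the map in degree $n$ of the resulting long exact cohomology sequence: $\Ext^{n-c}_R(k,H^c_I(R))=\Ext^d_R(k,H^c_I(R))\to k$. This is your alternative, except that no spectral sequence or five-term sequence is needed --- just the cohomology sequence of the distinguished triangle --- and the isomorphism you should invoke is $\Hom_R(k,\Gamma_I(E^{\cdot}))\simeq k[-n]$ rather than $\Rgam(E^{\cdot})\simeq E[-n]$. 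Also note that you do not need the surjectivity of $\psi$ (hence Proposition~\ref{5.3} and its regularity hypothesis) to \emph{construct} $\phi$; surjectivity of $\psi$ is only relevant for the non-vanishing discussion in Theorem~\ref{6.2}. If you reorganize so that the ``alternatively'' argument comes first, with the regularity-dependent reformulation via $\bar\lambda$ presented afterward as preparation for Theorem~\ref{6.2}, the proposal matches the paper.
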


\begin{proof} Apply the derived functor $\operatorname{R} \Hom_R(k,\cdot)$
to the short exact sequence as it is defined in the definition of the
truncation complex (cf. \ref{5.2}). Then there is the following short
exact sequence of complexes in the derived category
\[
 0 \to \operatorname{R} \Hom_R(k,H^c_I(R))[-c] \to \operatorname{R}
 \Hom_R(k,\Gamma_{\mathfrak m}(E^{\cdot})) \to \operatorname{R} \Hom_R(k,C^{\cdot}_R(I)) \to 0.
\]
Now we consider the complex in the middle. It is represented by
$\Hom_R(k, \Gamma_{\mathfrak m}(E^{\cdot}))$ since $\Gamma_{\mathfrak m}(E^{\cdot})$
is a complex of injective modules. Moreover there are the following isomorphisms
\[
 \Hom_R(k, \Gamma_{\mathfrak m}(E^{\cdot})) \simeq \Hom_R(k, E^{\cdot}) \simeq k[-n].
\]
By virtue of the long exact cohomology sequence it yields the natural homomorphism of the statement.
\end{proof}

In \cite[Conjecture 2.7]{HeS} Hellus and the author conjectured that the homomorphism in
Lemma \ref{6.1} is in general non-zero. In the following we shall confirm this
question in the case of $(R,\mathfrak m)$ a regular local ring containing a field. To this
end we need a few auxiliary constructions.

By virtue of Proposition \ref{5.3} and Lemma \ref{6.1} there is the following commutative
diagram
\[
\begin{array}{ccc}
  \Ext^d_R(k, H^c_I(R)) & \stackrel{\phi}{\longrightarrow} & k \\
  \downarrow  \lambda &  & \downarrow \\
  H^d_{\mathfrak m}(H^c_I(R)) & \stackrel{\psi}{\longrightarrow} & E .
\end{array}
\]
Here the vertical homomorphism $k \to E$ is -- by construction -- the natural inclusion. Therefore, $\lambda$ is not zero, provided $\phi$ is not zero.

\begin{theorem} \label{6.2} Let $(R, \mathfrak m)$ denote a regular local
ring containing a field with $\dim R = n.$ Let
$I \subset R$ denote an ideal of $\height I = c$ and $\dim R/I = d.$
Then the homomorphism
\[
\phi :  \Ext^d_R(k, H^c_I(R)) \to k
\]
is non-zero.
\end{theorem}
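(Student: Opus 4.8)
The plan is to reduce the non-vanishing of $\phi$ to a statement about the highest Lyubeznik number $l = \dim_k \Ext_R^d(k,H^c_I(R))$ and to exploit the free-module structure of $B = \Ext_R^c(H^c_I(R),R)$ established in Lemma \ref{4.2}. First I would invoke the commutative diagram before the theorem, which reduces the problem to showing that $\lambda : \Ext_R^d(k,H^c_I(R)) \to H^d_{\mathfrak m}(H^c_I(R))$ has image not contained in the kernel of $\psi : H^d_{\mathfrak m}(H^c_I(R)) \to E$; equivalently, by Matlis duality over the complete ring $R$, that the dual map $\Hom_R(\psi,E): R = \Hom_R(E,E) \to \Hom_R(H^d_{\mathfrak m}(H^c_I(R)),E) = B$ composed with $\Hom_R(\lambda,E): B \to \Hom_R(\Ext_R^d(k,H^c_I(R)),E)$ is non-zero. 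But this composite is precisely the natural map $R \to B \to k \otimes_R B$ (using Remark \ref{4.1}(c), $\Ext_R^d(k,H^c_I(R)) \simeq \Hom_R(k,H^d_{\mathfrak m}(H^c_I(R)))$, whose Matlis dual is $k \otimes_R B$), and by Theorem \ref{3.1}(c) this map $k \otimes R \to k \otimes B$ is non-zero. So the heart of the argument is to identify the Matlis dual of the diagram before the theorem with the reduction mod $\mathfrak m$ of the ring map $R \to B$.

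The key steps, in order, would be: (1) reduce, via the given commutative square, to the non-vanishing of $\psi \circ \lambda$; (2) apply the Matlis duality functor $\Hom_R(-,E)$ to the whole square, using that $R$ is complete regular so $E$ is the injective hull of $k$ and Matlis duality is a perfect duality on the relevant modules (here one uses that $\Ext_R^d(k,H^c_I(R))$ and $H^d_{\mathfrak m}(H^c_I(R))$ are both of finite length, resp. Artinian, by Remark \ref{4.1}); (3) identify $\Hom_R(H^d_{\mathfrak m}(H^c_I(R)),E)$ with $B$ by Theorem \ref{3.2}(b), $\Hom_R(E,E)$ with $R$, and $\Hom_R(\Ext_R^d(k,H^c_I(R)),E)$ with $k \otimes_R B$ using the flat base change $k \otimes_R \Hom_R(M,E) \simeq \Hom_R(\Hom_R(k,M),E)$ valid since $E$ is injective; (4) check that under these identifications the dualized square becomes the factorization $R \to B \to k \otimes_R B$ of the canonical surjection-composed-with-structure-map, and that the right-hand vertical $\Hom_R(k \hookrightarrow E, E)$ becomes the natural surjection $R \to k$; (5) conclude by Theorem \ref{3.1}(c) that $k \otimes R \to k \otimes B$ is non-zero, hence the original composite $\psi \circ \lambda$, hence $\phi$, is non-zero.

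The main obstacle I expect is step (4): verifying that the Matlis-dualized diagram genuinely coincides, as a diagram, with the canonical one coming from $R \to B$ — i.e. that the trace map $\phi$ (built from the truncation complex and the identification $\Hom_R(k,\Gamma_{\mathfrak m}(E^{\cdot})) \simeq k[-n]$ in Lemma \ref{6.1}) is dual to the structure homomorphism $R \to B$ rather than merely to some a priori unrelated map $R \to k \otimes B$. Concretely one must track the identification $\operatorname{R}\Hom_R(k,\Gamma_{\mathfrak m}(E^{\cdot})) \simeq k[-n]$ through Matlis duality and match it with $\operatorname{R}\Gamma_{\mathfrak m}(\Gamma_I(E^{\cdot})) \simeq E[-n]$ from the construction preceding Proposition \ref{5.3}; both arise from the same resolution $R \qism E^{\cdot}$, so the compatibility should hold, but it requires care with signs, shifts $[-c]$ versus $[-n]$, and the direction of the connecting homomorphisms in the two long exact sequences. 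Once this bookkeeping is settled, the non-vanishing is immediate from Theorem \ref{3.1}(c), which already says a ring homomorphism cannot kill $1 + \mathfrak m$.
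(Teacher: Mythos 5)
Your proposal is correct and follows essentially the same route as the paper: reduce via the commutative square to the non-vanishing of $\psi\circ\lambda$, pass to Matlis duals, and invoke Theorem~\ref{3.1}(c). The only cosmetic difference is order of operations — the paper first applies $\Hom_R(k,\cdot)$ to the square to obtain the isomorphism $\bar\lambda$ via Remark~\ref{4.1}(c) and then Matlis-dualizes the bottom horizontal arrow $\bar\psi$, whereas you dualize the whole square at once and use Remark~\ref{4.1}(c) to identify $\Hom_R(\Ext_R^d(k,H^c_I(R)),E)\simeq k\otimes_R B$; these are equivalent. The compatibility you flag in step~(4) (that the dual of $\psi$ really is the structure map $R\to B$ of Theorem~\ref{3.1}, coming from the same inverse-limit identification as in Theorem~\ref{3.2}(b)) is a legitimate bookkeeping point, and the paper's own proof asserts it with exactly the same brevity; it is not a gap peculiar to your write-up.
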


\begin{proof} We may assume that $R$ is a complete local ring. By applying
$\Hom_R(k, \cdot)$ to the above diagram it provides the following commutative diagram
\[
\begin{array}{ccc}
  \Ext^d_R(k, H^c_I(R)) & \stackrel{\phi}{\longrightarrow} & k \\
  \downarrow \bar{\lambda} &  & \parallel \\
  \Hom_R(k, H^d_{\mathfrak m}(H^c_I(R))) & \stackrel{\bar{\psi}}{\longrightarrow} & k.
\end{array}
\]
Because of Remark \ref{4.1} the vertical homomorphism $\bar{\lambda}$ is an
isomorphism. Therefore it will be enough to show that $\bar{\psi}$ is not zero.
By Matlis duality it follows that
\[
B = \Ext^c_R(H^c_I(R), R) \simeq \Hom_R(H^d_{\mathfrak m}(H^c_I(R)), E).
\]
So, $\bar{\psi}$ is the Matlis dual of the natural homomorphism
$k \otimes R \to k \otimes B$ which is non-zero as shown in Theorem \ref{3.1} (c).
This proves that $\bar{\psi}$ is non-zero. By the previous observation it follows
that $\phi$ is non-zero too.
\end{proof}

\begin{remark} \label{6.3} Let $(R, \mathfrak m)$ be local ring that is the factor ring
of a Gorenstein ring. Let $M$ be a finitely generated $R$-module with $d = \dim_R M.$ In connection to his canonical element conjecture (cf.  \cite[Section 4]{mH}) Hochster has studied the natural homomorphism $\Ext_R^d(k,K(M)) \to H^d_{\mathfrak m}(K(M)),$ where $K(M)$ denotes the canonical module of $M.$ In particular, he considered the problem whether this map is non-zero.

In our situation here the natural homomorphism $\lambda: \Ext^d_R(k, H^c_I(R)) \to H^d_{\mathfrak m}(H^c_I(R))$ is the direct limit of the natural homomorphisms
$\lambda_{\alpha} : \Ext_R^d(k, K(R/I^{\alpha})) \to H^d_{\mathfrak m}(K(R/I^{\alpha})), \alpha \in \mathbb N.$ Recall that $H^c_I(R) \simeq \varinjlim K(R/I^{\alpha})$ and that local cohomology commutes with direct limits. So in a certain sense, $\lambda$ is the stable value of all of the $\lambda_{\alpha}.$ It would be of some interest to relate the non-vanishing of $\lambda$ to other problems in commutative algebra.
\end{remark}

\section{Examples}

Let $I \subset R$ denote an ideal with $c = \height I.$
The following example shows that the property $\Hom_R(H^c_I(R),H^c_I(R)) \simeq R$ is
not preserved by passing to the localization with respect to a prime ideal.

\begin{example} \label{7.1} Let $k$ denote an algebraically closed field. Let $R =
k[|a,b,c,d,e|]$ denote the formal power series ring in five
variables. Let $I \subset R$ denote the prime ideal with the
parametrization
\[
a = su^2, b = stu, c = tu(t-u), d = t^2(t-u), e = u^3.
\]
It is easy to see that $I = (ad - bc, a^2c + abe -b^2e, c^3 + cde
- d^2e, ade - bde + ac^2).$ Moreover $\dim R/I = 3, n = 5$ and
$H^i_I(R) = 0$ for all $i \not= 2,3$ as it is a consequence of the
Second Vanishing Theorem. Clearly $V(I)$ is connected
in codimension one because $I$ is a prime ideal. So it follows (cf.
Remark \ref{4.3}) $\dim_k \Ext_R^3(k, H^2_I(R)) = 1.$ Therefore
the endomorphism ring $\Hom_R(H^2_I(R), H^2_I(R))$
is isomorphic to $R$ (cf. Lemma \ref{4.2}).

Let $\mathfrak p = (a,b,c,d)R.$ Then $\dim R_{\mathfrak p}/I
R_{\mathfrak p} = 2.$ The ideal $I R_{\mathfrak p}$ corresponds to
the parametrization $(x, xy, y(y-1), y^2(y-1)).$ It follows that
$V(I R_{\mathfrak p})\setminus \{{\mathfrak p}R_{\mathfrak p}\}$
is not formally connected (cf. \cite[3.4.2]{rH}). Therefore $V(I
\widehat{R_{\mathfrak p}})$ has two connected components. Whence
$\Hom_{R_{\mathfrak p}}(H^2_{I R_{\mathfrak p}}(R_{\mathfrak p}),
H^2_{I R_{\mathfrak p}}(R_{\mathfrak p})) \simeq {\widehat {R_{\mathfrak p}}}^2,$
because $\dim_{k(\mathfrak p)} \Ext_{R_{\mathfrak p}}^2(k(\mathfrak p), H^2_{I R_{\mathfrak p}}(R_{\mathfrak p}) = 2.$
\end{example}

The following example (invented by Hochster) shows that the Bass numbers
$\dim_k \Ext_R^i(k, H^c_I(R))$ depend upon the characteristic of the
ground field $k.$

\begin{example} \label{7.2} Let $R = k[|x_1,\ldots,x_6|]$ denote the
formal power series rings in six variables over the basic field $k.$
Let $I$ denote the ideal generated by the two by two minors of the
matrix
\[
M = \left(
      \begin{array}{ccc}
        x_1 & x_2 & x_3 \\
        x_4 & x_5 & x_6 \\
      \end{array}
    \right).
\]
Then $R/I$ is a four dimensional Cohen-Macaulay ring and $c = \height I =2.$ 
It follows that
$H^i_I(R) = 0$ for all $i \not= 2$ provided $k$ is a field of characteristic
$p > 0.$ Furthermore $H^i_I(R) = 0$ for all $i \not= 2, 3$ and $H^3_I(R) \not= 0$
in the case of $k$ a field of characteristic zero. This is shown via the
Reynolds operator. Clearly $R/I$ has an isolated singularity, so that $\Supp H^3_I(R) \subset
\{\mathfrak m\}.$ By virtue of Corollary \ref{5.4} and Remark \ref{4.1}
it follows that $\Ext_R^i(k, H^2_I(R)) = 0$ for all $i < 4$ if $k$ is of positive
characteristic, while $\Ext_R^2(k, H^2_I(R)) \not= 0$ if $k$ is of 
characteristic zero. 

Clearly  $H^0_{\mathfrak m}(H^3_I(R)) \simeq H^3_I(R),$ and therefore $H^3_I(R)$ 
is an injective $R$-module (cf. Remark \ref{4.1}). Now we apply the derived 
functor $\operatorname{R}\Hom_R(k, \cdot)$ to the short exact sequence of 
the truncation complex (cf. Definition \ref{5.2}). Because of $H^3(C^{\cdot}_R(I)) 
\simeq H^3_I(R)$ and $H^i(C^{\cdot}_R(I)) = 0$ for all $i \not= 3$ it induces an 
isomorphism $\Ext^2_R(k,H^2_I(R)) \simeq \Hom_R(k, H^3_I(R)).$ 

Finally Uli Walther (cf. \cite[Example 6.1]{uW}) has computed that $H^3_I(R) \simeq 
E_R(k).$ Therefore $\dim \Ext_R^2(k, H^2_I(R)) = 1$ in the case of characteristic zero.
\end{example}

The Example \ref{7.3} shows that the number of maximal
ideals of $B = \Hom_R(H^c_I(R), H^c_I(R))$ does not coincide with the multiplicity
of $B.$  

\begin{example} \label{7.3} Let $\mathbb Q$ denote the field of rational numbers.
Consider $\mathbb Q(i)$ denote the field extension of $\mathbb Q$ by the imaginary
unit. Let $R = \mathbb Q[|w,x,y,z|]$ and $S = \mathbb Q(i)[|w,x,y,z|]$ denote
the formal power series ring in four variables over $\mathbb Q$ and $\mathbb Q(i)$
respectively. Let $J = (w-ix,y-iz) \cap (w+ix, y+iz) \subset S$ and $I = J \cap R.$
Then $I = (w^2+x^2,y^2+z^2,wy+xz,wz-xy)$ is a two-dimensional prime ideal. Therefore (cf.
Theorem \ref{4.4}) $B_R = \Hom_R(H^2_I(R), H^2_I(R))$ is a local ring. Moreover
\[
B_S = \Hom_S(H^2_J(S), H^2_J(S)) \simeq S/(w-ix,y-iz) \oplus S/(w+ix,y+iz)
\]
as it follows by the Mayer-Vietoris sequence for local cohomology. It is easily
seen that $B_R \simeq \mathbb Q[a]/(a^2+1)[|w,x,y,z|].$ In fact, $\dim \Ext_R^2(R/\mathfrak m, H^c_I(R) = 2.$
\end{example}

\begin{acknowledgement} The author is grateful to Wenliang Zhang for drawing his attention to Uli Walther's 
paper \cite{uW} and suggesting the computation of the Lyubeznik number in Example \ref{7.2} in the 
situation of characteristic zero. The final example was suggested to the author by Lyubeznik in order to correct
an error in a previous version of the paper.
\end{acknowledgement}

\end{document}